%
%
%
%
\documentclass{amsart}








\newtheorem{theorem}{Theorem}[section]           
                
\newtheorem{corollary}{Corollary}[section]
\newtheorem{lem}{Lemma}[section]  

\theoremstyle{definition}

\newtheorem{remark}{Remark}[section]

\newcommand{\C}{{\mathbb{C}}}

\newcommand{\RR}{{\mathcal{R}_n}}  

\begin{document}

\title{Bounds of the Derivative of Some Classes of Rational Functions}

\author{Nuttapong Arunrat}
\address{Department of Mathematics, Faculty of Science, Khon Kaen University, 40002, Thailand}
\email{nutaru36@gmail.com}

\author{Keaitsuda Maneeruk Nakprasit}
\address{Department of Mathematics, Faculty of Science, Khon Kaen University, 40002, Thailand}
\email{kmaneeruk@hotmail.com}

\subjclass[2000]{30A10, 30C15, 26D07}



\keywords{rational function, derivative, inequality, Blaschke product}

\begin{abstract}
Let $r(z)$ be a rational function with at most $n$ poles, $a_1, a_2, \ldots, a_n,$ where $|a_j| > 1,$ $1\leq j\leq n.$
This paper investigates the estimate of the modulus of the derivative of a rational function $r(z)$ on the unit circle. We establish an upper bound when all zeros of $r(z)$ lie in $|z|\geq k\geq 1$ and a lower bound when all zeros of $r(z)$ lie in $|z|\leq  k \leq 1.$ In particular,  when $k=1$ and $r(z)$ has exactly $n$ zeros, we obtain a generalization of results by A. Aziz and W. M. Shah [Some refinements of Bernstein-type inequalities for rational functions, Glas. Mat., {\bf 32}(52) (1997), 29--37.].
\end{abstract}

\maketitle

\section{Introduction and statement of results}
\indent Let $\mathcal{P}_n$ denote the class of all complex polynomials of degree at most $n$ and let $k$ be a positive real number. We denote $T_k = \{ z : |z|=k\}, D_{k-} = \{ z : |z|< k\},$ and $D_{k+} =\{ z : |z|>k\}.$  For $a_j \in \C$ such that $1 \leq j \leq  n,$ let 
			$$w(z) = \prod_{j=1}^n (z-a_j)$$
and let 
			$$B(z) = \prod_{j=1}^n \left(\dfrac{1-\overline{a_j}z}{z-a_j}\right), \quad \mathcal{R}_n = \mathcal{R}_n(a_1, a_2,\ldots, a_n) := \left\{ \dfrac{p(z)}{w(z)} : p \in \mathcal{P}_n\right\}.$$
The product $B(z)$ is known as a Blaschke product.\\ We can show that $|B(z)|=1$ and $\dfrac{zB'(z)}{B(z)}=|B'(z)|$ for $z \in T_1.$\\
Then $\mathcal{R}_n$ is the set of rational functions with at most $n$ poles $a_1, a_2,\ldots, a_n$ and with finite limit at infinity. For $f$ defined on $T_1$ in the complex plane, we denote $||f|| = \sup_{z \in T_1} |f(z)|$, the \emph{Chebyshev norm} of $f$ on $T_1.$ Throughout this paper, we always assume that all poles $a_1, a_2, \ldots, a_n$ are in $D_{1+}.$

\indent In 1995, X. Li, R. N. Mohapatra and  R. S. Rodriguez \cite{Li} proved Bernstein-type inequalities for rational functions $r(z) \in \mathcal{R}_n$ with prescribed poles in the Chebychev norm on the unit circle. Among other things they proved the following results for rational functions with restricted zeros.

\begin{theorem}\label{thm-li} \cite{Li}
		Let $r \in \mathcal{R}_n$ with all its zeros lie in $T_1 \cup D_{1+}.$ Then for $z \in T_1$, 
				\begin{equation}
							|r'(z)| \leq \dfrac{1}{2} |B'(z)| \cdot ||r||. \label{eq1.1}
				\end{equation}
		Equality holds for $r(z) = a B(z) + b$ with $|a|=|b| =1.$
\end{theorem}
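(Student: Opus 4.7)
The plan is to follow the classical conjugate-reciprocal approach. Define the companion rational function $r^*(z) := B(z)\,\overline{r(1/\bar z)}$, which again lies in $\mathcal{R}_n$ (it has the same poles as $r$ and the reverse polynomial of the numerator), satisfies $|r^*(z)|=|r(z)|$ for $z\in T_1$, and whose zeros are the reciprocal conjugates of the zeros of $r$; the hypothesis thus places all zeros of $r^*$ in $T_1\cup D_{1-}$. Using $\bar z=1/z$ on $T_1$, one has $r^*(z)=B(z)\,\overline{r(z)}$ there, and differentiating in the angular parameter $\theta$ (where $z=e^{i\theta}$) while using $zB'(z)/B(z)=|B'(z)|$ yields the boundary identity
\[
|(r^*)'(z)| \;=\; \bigl|\,|B'(z)|\,r(z)-z\,r'(z)\,\bigr|\qquad (z\in T_1).
\]

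The proof then rests on two assertions. First, under the zero hypothesis, the pointwise comparison $|r'(z)|\leq|(r^*)'(z)|$ on $T_1$. Squaring the boundary identity, this reduces (at points where $r(z)\neq 0$) to showing $2\,\mathrm{Re}\bigl(zr'(z)/r(z)\bigr)\leq|B'(z)|$. Using the partial-fraction expansion
\[
\frac{zr'(z)}{r(z)} \;=\; \sum_{k}\frac{z}{z-z_k} \,-\, \sum_{j}\frac{z}{z-a_j},
\]
together with the elementary identity $2\,\mathrm{Re}\bigl(z/(z-w)\bigr)=1-(|w|^2-1)/|z-w|^2$ valid for $z\in T_1$, one obtains
\[
2\,\mathrm{Re}\frac{zr'(z)}{r(z)} \;=\; \sum_{k}2\,\mathrm{Re}\frac{z}{z-z_k} \,+\, |B'(z)| \,-\, n.
\]
The hypothesis $|z_k|\geq 1$ makes each summand over $k$ at most $1$, and there are at most $n$ such terms, so the whole expression is $\leq|B'(z)|$. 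Zeros of $r$ on $T_1$ are handled by a limiting argument after perturbing them slightly outward. Second, the Bernstein-type sum inequality
\[
|r'(z)|+|(r^*)'(z)| \;\leq\; |B'(z)|\cdot\|r\| \qquad (z\in T_1),
\]
which is one of the principal lemmas of Li--Mohapatra--Rodriguez and which I would invoke directly. Combining the two assertions yields $2|r'(z)|\leq|B'(z)|\cdot\|r\|$.

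For the equality case $r(z)=aB(z)+b$ with $|a|=|b|=1$, one has $r'(z)=aB'(z)$, hence $|r'(z)|=|B'(z)|$, while $\|r\|=|a|+|b|=2$ is attained on the (nonempty) set $\{z\in T_1:aB(z)=b\}$; therefore $\tfrac12|B'(z)|\cdot\|r\|=|B'(z)|=|r'(z)|$ at those points. The main obstacle is arriving cleanly at the boundary identity for $|(r^*)'|$ and justifying the comparison at zeros of $r$ on $T_1$; once the identity is in hand, the remaining steps are partial-fraction bookkeeping combined with the cited sum inequality.
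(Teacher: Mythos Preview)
The paper does not give its own proof of this theorem; it is quoted as a background result of Li--Mohapatra--Rodriguez. Your argument is correct and is essentially the original one, and all of its ingredients appear in the paper as cited lemmas: the sum inequality $|r'|+|(r^*)'|\le|B'|\,\|r\|$ is Lemma~2.1, the real-part bound $2\,\mathrm{Re}\bigl(zr'(z)/r(z)\bigr)\le|B'(z)|$ that you obtain by partial fractions is exactly Lemma~2.3(i) (your identity $2\,\mathrm{Re}\bigl(z/(z-w)\bigr)=1-(|w|^2-1)/|z-w|^2$ combined with Lemma~2.4 is how that lemma is proved), and the boundary identity $|(r^*)'(z)|=\bigl|\,|B'(z)|\,r(z)-zr'(z)\,\bigr|$ is derived verbatim in the paper's proof of Theorem~1.7.

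One small simplification: at points $z\in T_1$ with $r(z)=0$ you do not need a perturbation argument, since the boundary identity directly gives $|(r^*)'(z)|=|zr'(z)|=|r'(z)|$, so the comparison $|r'(z)|\le|(r^*)'(z)|$ is immediate there. Also, in the equality case you may note that $|r'(z)|=|B'(z)|=\tfrac12|B'(z)|\cdot\|r\|$ holds for \emph{every} $z\in T_1$, not only where $aB(z)=b$.
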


\begin{theorem}\label{thm-li2} \cite{Li}
		Let $r \in \mathcal{R}_n$, where $r$ has exactly $n$ poles at $a_1,a_2,\ldots,a_n$ and all its zeros lie in $T_1 \cup D_{1-}.$ 
Then for $z \in T_1$, 
			\begin{equation}
					|r'(z)| 	\geq 		\left[\dfrac{1}{2}|B'(z)| - \dfrac{1}{2}(n-t)\right]\cdot|r(z)|,   \label{eq1.4}
			\end{equation}
		where $t$ is the number of zeros of $r.$ Equality holds for $r(z) = a B(z) + b$ with $|a|=|b| =1.$
\end{theorem}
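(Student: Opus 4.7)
The plan is to obtain \eqref{eq1.4} by bounding the real part of the logarithmic derivative $zr'(z)/r(z)$ on $T_1$ and then using the elementary estimate $|zr'/r|\geq \mathrm{Re}(zr'/r)$. Since $|z|=1$, we have $|r'(z)|/|r(z)| = |zr'/r|$, so a pointwise lower bound on the real part translates directly into \eqref{eq1.4} whenever that bound is non-negative; the cases where the right-hand side of \eqref{eq1.4} is non-positive or where $r(z)=0$ are trivial.

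First I would write $r=p/w$ with $p(z)=c\prod_{j=1}^{t}(z-z_j)$ of degree exactly $t$ (using both that $r$ has precisely $n$ poles and that $t$ is its zero count), so that
$$\frac{zr'(z)}{r(z)} \;=\; \sum_{j=1}^{t}\frac{z}{z-z_j} \;-\; \sum_{j=1}^{n}\frac{z}{z-a_j}.$$
On $T_1$, the substitution $z/(z-\alpha) = 1/(1-\alpha\bar z)$ and the on-circle identity $|1-\alpha\bar z| = |z-\alpha|$ apply to each term. Combined with the elementary identity $\mathrm{Re}\bigl(1/(1-w)\bigr) = 1/2 + (1-|w|^2)/(2|1-w|^2)$ (with $w=\alpha\bar z$, so $|w|=|\alpha|$), a direct expansion yields
$$\mathrm{Re}\!\left(\frac{zr'(z)}{r(z)}\right) \;=\; -\frac{n-t}{2} + \frac{1}{2}\sum_{j=1}^{t}\frac{1-|z_j|^2}{|z-z_j|^2} + \frac{1}{2}\sum_{j=1}^{n}\frac{|a_j|^2-1}{|z-a_j|^2}.$$
The last sum equals $\tfrac12|B'(z)|$; this formula for $|B'|$ comes from $zB'/B = n - 2\,\mathrm{Re}(zw'/w)$ on $T_1$ (a consequence of $B=w^*/w$ with $w^*(z)=\prod_{j=1}^n(1-\overline{a_j}z)$ and the standard polynomial identity $zw'/w + \overline{zw^{*\prime}/w^*} = n$), together with the same expansion of $\mathrm{Re}(1/(1-w))$ applied to each pole term.

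With the above identity in hand, the hypothesis $|z_j|\leq 1$ makes the middle sum non-negative, so $\mathrm{Re}(zr'/r)\geq \tfrac12|B'(z)| - \tfrac12(n-t)$, and \eqref{eq1.4} follows at once from $|r'|/|r| = |zr'/r| \geq \mathrm{Re}(zr'/r)$. For the equality assertion, $r=aB+b$ with $|a|=|b|=1$ has all $n$ of its zeros on $T_1$ (so $t=n$ and the middle sum vanishes identically), and at any $z\in T_1$ where $aB(z)$ and $b$ are aligned one has $|r(z)|=2$ and $|r'(z)|=|aB'(z)|=|B'(z)|$, saturating \eqref{eq1.4}. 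The main obstacle I foresee is purely the bookkeeping in the real-part computation---keeping the on-circle identities $\bar z=1/z$ and $|1-\alpha\bar z|=|z-\alpha|$ straight, and invoking the $\mathrm{Re}(1/(1-w))$ identity twice (once for the zero sum, once for the pole sum via the $|B'|$ formula) so that the terms collapse into the stated form.
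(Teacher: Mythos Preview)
Your argument is correct and matches the paper's approach: although Theorem~\ref{thm-li2} itself is only quoted from \cite{Li}, the paper's own proofs of the generalizations (Lemma~\ref{lem3}(ii) and Theorem~\ref{thm2}) proceed exactly as you do, bounding $\mathrm{Re}\bigl(zr'(z)/r(z)\bigr)$ on $T_1$ via the logarithmic-derivative decomposition and then invoking $|r'|/|r|=|zr'/r|\ge \mathrm{Re}(zr'/r)$. The only cosmetic difference is that you obtain $\mathrm{Re}\bigl(z/(z-b_j)\bigr)$ through the algebraic identity $\mathrm{Re}\bigl(1/(1-w)\bigr)=\tfrac12+(1-|w|^2)/(2|1-w|^2)$, whereas the paper reads off the same bound from the image circle of the M\"obius map $z\mapsto z/(z-b_j)$; your explicit formula $|B'(z)|=\sum_j(|a_j|^2-1)/|z-a_j|^2$ is exactly the content of Lemma~\ref{lem2}.
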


\begin{remark} In particular, if $r$ has exactly $n$ zeros in $T_1 \cup D_{1-}$, then the inequality \eqref{eq1.4} yields Bernstein-type inequality, for $z \in T_1$,
		\begin{equation}
						|r'(z)| \geq \dfrac{1}{2}|B'(z)| |r(z)|. \label{eq1.5}
		\end{equation}
\end{remark}

\indent In 1997, inequalities \eqref{eq1.1} and \eqref{eq1.5} were improved by  A. Aziz and W. M. Shah \cite{AzizShah} under the same hypothesis. They obtained the following theorems.

\begin{theorem} \label{thm-azizshah1} 	\cite{AzizShah}
		Let $r \in \mathcal{R}_n$ with all its zeros lie in $T_1 \cup D_{1+}.$ Then for $z \in T_1$, 
				\begin{equation}
						|r'(z)| \leq \dfrac{1}{2} |B'(z)| \left(||r||-m\right), \label{eq1.2}
				\end{equation}
		where $m = \displaystyle \min_{|z|=1} |r(z)|.$ Equality holds for $r(z) = B(z) + he^{i\alpha}$ where $h\geq 1$ and \mbox{$\alpha$ is real.}
\end{theorem}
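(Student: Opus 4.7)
Assume $m > 0$; the case $m = 0$ is Theorem~\ref{thm-li}. The plan is to apply a refinement of Theorem~\ref{thm-li} to the perturbation $F(z) := r(z) - \lambda m$ for parameters $\lambda \in \C$ with $|\lambda| < 1$, and then to optimize over $\lambda$ in a $z$-dependent way.

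First I would verify that $F \in \mathcal{R}_n$ and that all zeros of $F$ lie in $T_1 \cup D_{1+}$. Writing $r = p/w$, one has $F = (p - \lambda m w)/w$; since $|p(z)| = |r(z)|\,|w(z)| \ge m|w(z)| > |\lambda m\, w(z)|$ on $T_1$, Rouch\'e's theorem forces $p - \lambda m w$ and $p$ to have the same number of zeros in $D_{1-}$, namely zero (the latter because the zeros of $r$ and of $w$ both lie outside $D_{1-}$).

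Next I would invoke the two ingredients underlying the Li--Mohapatra--Rodriguez proof of Theorem~\ref{thm-li}, phrased in terms of the involution $f^*(z) := B(z)\,\overline{f(1/\bar z)}$ (which satisfies $|f^*(z)| = |f(z)|$ on $T_1$): \textbf{(A)} $|f'(z)| + |(f^*)'(z)| \le |B'(z)|\,\|f\|$ for every $f \in \mathcal{R}_n$ and $z \in T_1$; and \textbf{(B)} $|f'(z)| \le |(f^*)'(z)|$ whenever, in addition, the zeros of $f$ lie in $T_1 \cup D_{1+}$. A direct computation yields $F^*(z) = r^*(z) - \bar\lambda m B(z)$, so (B) applied to $F$ gives
\begin{equation*}
|r'(z)| \le \bigl|(r^*)'(z) - \bar\lambda m B'(z)\bigr|, \qquad |\lambda| < 1,\ z \in T_1.
\end{equation*}
For fixed $z \in T_1$, I would choose the argument of $\bar\lambda$ so that $\bar\lambda m B'(z)$ is a nonnegative real multiple of $(r^*)'(z)$ and let $|\lambda| \to 1^{-}$, producing $|r'(z)| \le \max\{|(r^*)'(z)| - m|B'(z)|,\,0\}$. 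Combining this with (A) applied to $r$ (so $|(r^*)'(z)| \le |B'(z)|\,\|r\| - |r'(z)|$) and eliminating $|(r^*)'(z)|$ yields $2|r'(z)| \le |B'(z)|(\|r\| - m)$ in the nontrivial regime; the trivial case $|r'(z)| = 0$ is immediate from $\|r\| \ge m$.

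The main obstacle is that Theorem~\ref{thm-li} applied directly to $F$ is not sharp enough, because the triangle inequality only gives $\|F\| \le \|r\| + m$. The refinement genuinely requires the finer ingredient (B), which separates $|r'|$ from $|(r^*)'|$ rather than packaging them into $\tfrac12|B'|\,\|f\|$; a secondary technical point is the passage $|\lambda|\to 1^{-}$, forced on us because the Rouch\'e step in paragraph two needs the strict inequality $|\lambda m| < |r|$ on $T_1$.
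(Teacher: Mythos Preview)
Your proposal is correct and follows essentially the same route as the paper's proof of its generalization (Theorem~\ref{thm1}, specialized to $k=1$): perturb to $R=r-\alpha m$ with $|\alpha|<1$, use Rouch\'e to keep the zeros in $T_1\cup D_{1+}$, invoke the comparison $|R'|\le|(R^*)'|$ (your ingredient~(B), which is exactly what the paper's computation via Lemma~\ref{lem3}(i) collapses to when $k=1$, $t=n$), write $(R^*)'=(r^*)'-\bar\alpha m B'$, choose the argument of $\alpha$ and let $|\alpha|\to1$, and finish with Lemma~\ref{lem1} (your ingredient~(A)). The only cosmetic difference is that you quote~(B) directly from Li--Mohapatra--Rodriguez, whereas the paper rederives it from the real-part estimate on $zR'/R$.
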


\begin{theorem}\label{thm-azizshah2}\cite{AzizShah}
		Let $r \in \mathcal{R}_n$ , where $r$ has exactly $n$ zeros and all its zeros lie in $T_1 \cup D_{1-}.$ Then for $z \in T_1$, 
				\begin{equation}
						|r'(z)| \geq \dfrac{1}{2} |B'(z)| \left(|r(z)|+m\right) \label{eq1.6}
				\end{equation}
		where $m = \displaystyle \min_{|z|=1} |r(z)|.$ Equality holds for $r(z) = B(z) + he^{i\alpha}$ where $h\leq 1$ and \mbox{$\alpha$ is real.}
\end{theorem}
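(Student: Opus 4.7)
The plan is to reduce the claimed inequality to the known bound \eqref{eq1.5} by a Rouch\'e-type perturbation of $r$, a strategy familiar from the polynomial analogue due to Aziz. I would first dispose of the trivial case $m=0$: the right-hand side then collapses to $\tfrac{1}{2}|B'(z)|\,|r(z)|$, which is exactly \eqref{eq1.5} from the remark following Theorem~\ref{thm-li2}. So assume $m>0$; then every zero of $r$ must lie strictly inside the unit circle.

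For each $\alpha\in\C$ with $|\alpha|<1$, set
\[
R_\alpha(z) := r(z)-\alpha m.
\]
Then $R_\alpha\in\RR$ shares its poles with $r$, and its numerator $p(z)-\alpha m\,w(z)$ still has degree $n$, so $R_\alpha$ carries exactly $n$ zeros counted with multiplicity. Since $r$ is regular on $|z|\le 1$ and $|\alpha m|=|\alpha|m<m\le|r(z)|$ on $T_1$, Rouch\'e's theorem ensures that $R_\alpha$ and $r$ have the same number of zeros in $D_{1-}$, namely $n$, so all $n$ zeros of $R_\alpha$ lie in $T_1\cup D_{1-}$. Applying \eqref{eq1.5} to $R_\alpha$ and using $R_\alpha'=r'$ yields
\[
|r'(z)|\ \ge\ \tfrac{1}{2}|B'(z)|\,|r(z)-\alpha m|\qquad(z\in T_1,\ |\alpha|<1).
\]

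To finish I would optimise over $\alpha$: for each $z\in T_1$ with $r(z)\ne 0$, choose $\arg\alpha$ so that $-\alpha m$ has the same argument as $r(z)$; then $|r(z)-\alpha m|=|r(z)|+|\alpha|m$, and letting $|\alpha|\to 1^-$ delivers the claimed bound. The boundary point $r(z)=0$ is absorbed in the same limit, since then the estimate already reads $|r'(z)|\ge \tfrac{1}{2}|B'(z)|\,|\alpha|m$. The only delicate ingredient is the Rouch\'e step, which demands the strict inequality $|\alpha m|<|r(z)|$ everywhere on $T_1$; this is exactly why the preliminary split on $m$ is necessary, since only for $m>0$ do all zeros of $r$ remain strictly inside the disk. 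Once this bookkeeping is done, the rest is essentially mechanical.
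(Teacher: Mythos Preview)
Your proof is correct and follows essentially the same route as the paper's (which obtains Theorem~\ref{thm-azizshah2} as the case $k=1$, $t=n$ of Theorem~\ref{thm2}): perturb $r$ to $R_\alpha=r-\alpha m$ (the paper writes $r+\alpha m$, which is equivalent), use Rouch\'e to keep all zeros in the closed disk, apply the base inequality \eqref{eq1.5} (the paper applies the equivalent Lemma~\ref{lem3}(ii) directly), then choose $\arg\alpha$ and let $|\alpha|\to1$. The one step you assert without justification---that the numerator $p-\alpha m\,w$ still has degree $n$---is in fact forced by the Rouch\'e step itself, since otherwise $R_\alpha$ could not have $n$ zeros in $D_{1-}$.
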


\indent In 1999, A. Aziz and B. A. Zarger \cite{AzizZar} considered a class of rational functions $ \mathcal{R}_n$ not vanishing in $D_{k-}$, where $k \geq 1$ and established the following generalization of Theorem \ref{thm-li}.

\begin{theorem}\label{thm-azizzar} \cite{AzizZar}
		Let $r \in \mathcal{R}_n$ with all its zeros lie in $T_k \cup D_{k+},$ where $k \geq 1.$ Then for $z \in T_1$,
				\begin{equation}
						|r'(z)| \leq \dfrac{1}{2} \left[ |B'(z)| - \dfrac{n(k-1)}{k+1} \cdot \dfrac{|r(z)|^2}{||r||^2} \right] \cdot ||r||.\label{eq1.3}
				\end{equation}
		Equality holds for $r(z) = \left(\dfrac{z+k}{z-a}\right)^n$ and \mbox{$B(z) = \left(\dfrac{1-az}{z-a}\right)^n$} evaluated at $z=1$, where $a >1, k\geq 1.$
\end{theorem}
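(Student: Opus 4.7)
I would set $M:=\|r\|$ and introduce the reciprocal rational function $r^*(z):=B(z)\,\overline{r(1/\bar z)}\in\RR$, which satisfies $|r^*(z)|=|r(z)|$ on $T_1$; its zeros are the reflections $1/\overline{b_j}$ of the zeros $b_j$ of $r$, so under our hypothesis they all lie in $T_{1/k}\cup D_{(1/k)-}$. The target inequality rearranges to
\[
2|r'(z)|+\frac{n(k-1)}{k+1}\cdot\frac{|r(z)|^2}{M}\leq|B'(z)|\,M\qquad(z\in T_1),
\]
which I would derive by writing $2|r'(z)|=\bigl(|r'(z)|+|(r^*)'(z)|\bigr)-\bigl(|(r^*)'(z)|-|r'(z)|\bigr)$ and bounding the two pieces separately: an upper bound on the sum, independent of the zero condition, and a lower bound on the signed difference, which uses the location of the zeros of $r$.

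The upper bound $|r'(z)|+|(r^*)'(z)|\leq|B'(z)|\,M$ for every $r\in\RR$ on $T_1$ is the rational-function analog of the polynomial inequality $|p'|+|p^*{}'|\leq n\|p\|_{T_1}$; it is essentially contained in Li--Mohapatra--Rodriguez \cite{Li} (indeed it implies their Theorem \ref{thm-li} once one observes that the hypothesis there forces $|r'|\leq|(r^*)'|$ on $T_1$). For the lower bound, I form the finite Blaschke product (up to a unimodular constant) $\Phi(z):=r^*(z)/r(z)$, whose zeros $1/\overline{b_j}$ all lie in $T_{1/k}\cup D_{(1/k)-}$ and which has modulus $1$ on $T_1$. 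The boundary-derivative identity gives
\[
|\Phi'(z)|=\sum_{j=1}^{n}\frac{|b_j|^2-1}{|z-b_j|^2}\geq\frac{n(k-1)}{k+1}\quad\text{on }T_1,
\]
since $(|b|^2-1)/|z-b|^2\geq(|b|-1)/(|b|+1)\geq(k-1)/(k+1)$ for $|z|=1$ and $|b|\geq k\geq 1$ (first minimize over the argument of $b$, then use monotonicity in $|b|$). Combining $r^2\Phi'=(r^*)'r-r^*r'$ on $T_1$ with $|r|=|r^*|$, the sign/phase information for $\Phi$, and the normalization $|r|\leq M$ should yield the required $|(r^*)'|-|r'|\geq \frac{n(k-1)}{k+1}\,|r|^2/M$; adding the two bounds and dividing by $2$ then completes the proof.

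The main obstacle is the very last inference, converting the identity $|r|^2|\Phi'|=|(r^*)'r-r^*r'|$ into a signed lower bound on $|(r^*)'|-|r'|$. A naive reverse triangle inequality gives only $\bigl||(r^*)'|-|r'|\bigr|\leq|r|\,|\Phi'|$, which is the wrong direction. Extracting the correct sign will require exploiting that $z\Phi'(z)/\Phi(z)$ is positive real on $T_1$ (a consequence of $|\Phi|\equiv 1$ on $T_1$ with all zeros of $\Phi$ interior), while the replacement of $|r(z)|$ by $M$ in the denominator uses a boundary-maximum argument. An alternative route, perhaps cleaner, is to apply Theorem \ref{thm-li} to the rescaled rational function $\rho(z):=r(kz)$, whose zeros lie in $T_1\cup D_{1+}$, and then carefully track the effect of the pole rescaling to recover the correction term $\frac{n(k-1)}{k+1}\,|r|^2/M$.
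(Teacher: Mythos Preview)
Your two ingredients are exactly the ones the paper uses: the additive bound $|r'|+|(r^*)'|\le|B'(z)|\,\|r\|$ is Lemma~\ref{lem1}, and your estimate $|\Phi'(z)|\ge n(k-1)/(k+1)$ is (for $t=n$) equivalent to Lemma~\ref{lem3}(i), since on $T_1$ one checks that $z\Phi'(z)/\Phi(z)=|B'(z)|-2\,\text{Re}\bigl(zr'(z)/r(z)\bigr)$. The genuine gap is precisely the one you yourself flag: from $|r|^2|\Phi'|=|(r^*)'r-r^*r'|$ there is no phase argument that produces the \emph{signed} bound $|(r^*)'|-|r'|\ge\frac{n(k-1)}{k+1}\,|r|^2/M$, and you do not actually supply one.

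The paper avoids this difficulty by working with squares rather than with the signed difference. From the identity on $T_1$
\[
|z(r^*)'(z)|=\bigl|\,|B'(z)|\,r(z)-zr'(z)\,\bigr|
\]
one expands $\bigl|\,|B'|-\tfrac{zr'}{r}\,\bigr|^2=|B'|^2-2|B'|\,\text{Re}\bigl(\tfrac{zr'}{r}\bigr)+\bigl|\tfrac{zr'}{r}\bigr|^2$ and inserts the bound of Lemma~\ref{lem3}(i) to obtain
\[
|(r^*)'(z)|^2\ \ge\ |r'(z)|^2+\frac{n(k-1)}{k+1}\,|B'(z)|\,|r(z)|^2.
\]
Now replace $|(r^*)'|$ by the larger quantity $|B'|\,\|r\|-|r'|$ from Lemma~\ref{lem1}, square, cancel $|r'|^2$, and divide by $2|B'|\,\|r\|$; the claimed inequality drops out. (After the fact one can divide the displayed quadratic bound by $|(r^*)'|+|r'|\le|B'|\,\|r\|$ to recover exactly your missing signed estimate $|(r^*)'|-|r'|\ge\frac{n(k-1)}{k+1}|r|^2/\|r\|$, but only because the left side is already known to be nonnegative---so the squaring step is doing real work.)

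Two smaller issues. Your formula $|\Phi'|=\sum_{j=1}^{n}(|b_j|^2-1)/|z-b_j|^2$ tacitly assumes $r$ has exactly $n$ zeros, whereas the theorem allows fewer; when $t<n$ the ratio $r^*/r$ picks up an extra zero of order $n-t$ at the origin and is no longer the Blaschke product you describe. And the alternative rescaling $\rho(z)=r(kz)$ moves the poles to $a_j/k$, which need not lie in $D_{1+}$, so $\rho$ is not in the class $\RR$ and Theorem~\ref{thm-li} does not apply to it.
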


\indent In 2004, A. Aziz and W. M. Shah \cite{AzizShah2} considered a class of rational functions $\RR$ not vanishing in $D_{k+}$, where $k \leq 1$ and they proved the following generalization of Theorem \ref{thm-li2}.

\begin{theorem}\label{thm-azizshahnew}\cite{AzizShah2}
		Let $r \in \mathcal{R}_n$ where $r$ has exactly $n$ poles at $a_1, a_2, \ldots, a_n$ and all its zeros lie in $T_k \cup D_{k-},$ where $k \leq 1.$ Then for $z \in T_1$, 
				\begin{equation}
							|r'(z)| \geq \dfrac{1}{2} \left[ |B'(z)| + \dfrac{2t - n(1+k)}{1+k} \right] \cdot |r(z)|, \label{eq1.7}
				\end{equation}
		where $t$ is the number of zeros of $r.$ Equality holds for $r(z) = \dfrac{(z+k)^t}{(z-a)^n}$ and $B(z) = \left(\dfrac{1-az}{z-a}\right)^n$ evaluated at $z=1$, where $a > 1, k\leq 1.$ 
\end{theorem}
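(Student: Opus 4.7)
The plan is to work directly with the logarithmic derivative. Writing $r(z)=c\,\prod_{j=1}^{t}(z-z_j)\big/\prod_{j=1}^{n}(z-a_j)$, with $|z_j|\le k$ and $|a_j|>1$, I would compute
\[
\frac{z\,r'(z)}{r(z)}=\sum_{j=1}^{t}\frac{z}{z-z_j}-\sum_{j=1}^{n}\frac{z}{z-a_j},
\]
and use the trivial bound $|r'(z)|/|r(z)|=|zr'(z)/r(z)|\ge \operatorname{Re}(zr'(z)/r(z))$ on $T_1$. (At any point where $r(z)=0$ the claimed inequality is trivially true, so I only need to work where $r(z)\ne 0$.) Thus the whole theorem reduces to producing a sharp lower bound, term by term, for $\operatorname{Re}(z/(z-z_j))$ and an exact formula for $\sum \operatorname{Re}(z/(z-a_j))$ in terms of $|B'(z)|$.

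For the zero-side estimate, I would use the standard Poisson-kernel identity that holds for $|z|=1$:
\[
\operatorname{Re}\!\left(\frac{z}{z-\alpha}\right)=\tfrac12\left(1+\frac{1-|\alpha|^2}{|z-\alpha|^2}\right)\quad\text{when }|\alpha|<1,
\]
together with the triangle-inequality bound $|z-\alpha|\le 1+|\alpha|\le 1+k$, which yields $\operatorname{Re}(z/(z-z_j))\ge \tfrac{1}{1+k}$ whenever $|z_j|\le k\le 1$. For the pole side, the analogous computation for $|a|>1$ gives $\operatorname{Re}(z/(z-a))=\tfrac12\bigl(1-\tfrac{|a|^2-1}{|z-a|^2}\bigr)$, and summing over $j$ together with the identity $|B'(z)|=\sum_{j=1}^{n}\tfrac{|a_j|^2-1}{|z-a_j|^2}$ on $T_1$ produces
\[
\sum_{j=1}^{n}\operatorname{Re}\!\left(\frac{z}{z-a_j}\right)=\frac{n}{2}-\frac{1}{2}|B'(z)|.
\]

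Combining these two estimates gives
\[
\operatorname{Re}\!\left(\frac{zr'(z)}{r(z)}\right)\ge \frac{t}{1+k}-\frac{n}{2}+\frac12|B'(z)|
=\frac12\!\left[|B'(z)|+\frac{2t-n(1+k)}{1+k}\right],
\]
which is exactly \eqref{eq1.7} after multiplying by $|r(z)|$. The only real obstacle is the zero-side estimate $\operatorname{Re}(z/(z-z_j))\ge 1/(1+k)$; once this sharp lower bound is in hand, everything else is bookkeeping. Equality in the triangle-inequality step happens when $z$ and $z_j$ are antipodal on their respective circles and $|z_j|=k$, which matches the extremal function $r(z)=(z+k)^t/(z-a)^n$ evaluated at $z=1$ with $a>1$, confirming sharpness.
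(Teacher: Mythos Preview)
Your argument is correct and follows essentially the same route as the paper (its Lemma~\ref{lem3}(ii) combined with $|zr'/r|\ge\operatorname{Re}(zr'/r)$): split $zr'(z)/r(z)$ into zero and pole contributions, prove $\operatorname{Re}(z/(z-z_j))\ge 1/(1+k)$ for each zero with $|z_j|\le k$, and recognize the pole sum as $(n-|B'(z)|)/2$ (the paper's Lemma~\ref{lem2}). The only cosmetic difference is that the paper derives the zero-side bound by describing the image circle of the M\"obius map $z\mapsto z/(z-b_j)$, whereas you use the Poisson-kernel identity together with $|z-z_j|\le 1+|z_j|$; both yield the same sharp constant.
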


\indent As an immediate consequence of Theorem \ref{thm-azizshahnew}, they obtained the  generalization of inequality \eqref{eq1.5}, where $r$ has exactly $n$ zeros in $T_k \cup D_{k-},$ where $k\leq 1$.
\begin{corollary}\cite{AzizShah2}
		Let $r \in \mathcal{R}_n$ with all its zeros lie in $T_k \cup D_{k-},$ where $k \leq 1$. Then for $z \in T_1$,
				\begin{equation}
						|r'(z)| \geq \dfrac{1}{2} \left[ |B'(z)| + \dfrac{n(1-k)}{1+k} \right] \cdot |r(z)|. \label{eq1.8}
				\end{equation}
		Equality holds for $r(z) = \left(\dfrac{z+k}{z-a}\right)^n$ and \mbox{$B(z) = \left(\dfrac{1-az}{z-a}\right)^n$} evaluated at $z=1$, where $a > 1, k\leq 1.$
\end{corollary}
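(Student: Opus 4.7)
The plan is to derive this corollary directly from Theorem \ref{thm-azizshahnew}. As the preceding paragraph of the paper indicates, the corollary should be read with the additional assumption that $r$ has exactly $n$ zeros, so that Theorem \ref{thm-azizshahnew} applies with the parameter $t$ equal to $n$.

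First I would invoke Theorem \ref{thm-azizshahnew} with $t = n$, which gives, for $z \in T_1$,
\[
|r'(z)| \geq \frac{1}{2}\left[|B'(z)| + \frac{2n - n(1+k)}{1+k}\right] \cdot |r(z)|.
\]
Next I would simplify the constant in the bracket: $2n - n(1+k) = 2n - n - nk = n(1-k)$, so the right-hand side equals $\frac{1}{2}\left[|B'(z)| + \frac{n(1-k)}{1+k}\right]|r(z)|$, which is precisely \eqref{eq1.8}.

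For the equality case, I would verify that $r(z) = \left(\frac{z+k}{z-a}\right)^n$, with $a > 1$ and $k \leq 1$, has exactly $n$ zeros (all located at $z = -k \in T_k \cup D_{k-}$), so it is admissible in both the corollary and Theorem \ref{thm-azizshahnew}. The equality case of the latter at $z = 1$ therefore transfers directly to the bound \eqref{eq1.8}.

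There is essentially no technical obstacle here: the argument is a purely algebraic specialization of Theorem \ref{thm-azizshahnew} to $t = n$, together with the elementary identity $2n - n(1+k) = n(1-k)$. The only point requiring a moment of care is reading the corollary's hypothesis so that $r$ indeed has all $n$ zeros in $T_k \cup D_{k-}$, matching the hypothesis of the parent theorem; once that is done, no further estimation, function-theoretic argument, or Blaschke-product manipulation is needed.
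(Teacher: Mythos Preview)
Your proposal is correct and matches the paper's own treatment exactly: the paper presents this corollary as the immediate specialization of Theorem~\ref{thm-azizshahnew} to the case where $r$ has exactly $n$ zeros, i.e.\ $t=n$, after which the identity $2n-n(1+k)=n(1-k)$ gives \eqref{eq1.8}. No separate proof is given in the paper beyond that remark, so your argument is precisely what is intended.
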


Next, we state our main results which generalize results by A. Aziz and W. M. Shah \cite{AzizShah, AzizShah2} and A. Aziz and B. A. Zarger \cite{AzizZar}. Their proofs will be presented in \mbox{section 3.}  The first theorem gives an estimate of an upper bound of the modulus of the derivative of $r(z)$ on the unit circle when all zeros of $r(z)$ lie in $|z|\geq k\geq 1.$

\begin{theorem}[Main] 	\label{thm1} 
		Let $r \in \mathcal{R}_n$, where $r$ has exactly $n$ poles at $a_1, a_2, \ldots, a_n$ and all its zeros lie in $T_k \cup D_{k+}$, where $k \geq 1.$  Then for $z\in T_1$,
\begin{equation}
|r'(z)| \leq \dfrac{1}{2}\left[|B'(z)| - \dfrac{(n(1+k)-2t)(|r(z)|-m)^2}{(1+k)(||r||-m)^2}\right](||r||-m), \label{eqthm7}
\end{equation}
		where $t$ is the number of zeros of $r$ with counting multiplicity and $\displaystyle m=\min_{|z|=k} |r(z)|.$ Equality holds for $r(z) = \dfrac{(z+k)^t}{(z-a)^n}$ and \mbox{$B(z) = \left(\dfrac{1-az}{z-a}\right)^n$} evaluated at $z=1$, where $a >1, k\geq 1.$
\end{theorem}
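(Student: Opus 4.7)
The plan is to first derive an auxiliary ``$m=0$'' version of \eqref{eqthm7}, namely the $t$-refined Aziz--Zarger inequality
\[
(\ast)\qquad |r'(z)| \leq \frac{1}{2}\left[|B'(z)| - \frac{n(1+k)-2t}{1+k}\cdot\frac{|r(z)|^2}{||r||^2}\right]||r||, \qquad z \in T_1,
\]
and then to promote it to \eqref{eqthm7} via the minimum-modulus shift used by Aziz--Shah in the proof of Theorem~\ref{thm-azizshah1}.

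Writing $r = p/w$ with $\deg p = t$, the reflected function $r^*(z):= B(z)\overline{r(1/\bar z)}$ factors as $r^*(z) = z^{n-t}\tilde r(z)$, where $\tilde r(z) = p^*(z)/w(z) \in \RR$ shares the $n$ poles of $r$ and carries $t$ non-trivial zeros $1/\overline{z_i}$ in $T_{1/k} \cup D_{(1/k)-}$. Using the boundary identity $\mathrm{Re}(z/(z-w)) = \frac{1}{2} + (1-|w|^2)/(2|z-w|^2)$ on $T_1$, the quantitative bound $(1-|w_i|^2)/|z-w_i|^2 \geq (k-1)/(k+1)$ (valid since $|w_i| \leq 1/k$), and the pole-side decomposition $|B'(z)| = \sum_j (|a_j|^2-1)/|z-a_j|^2$, a direct computation of $\mathrm{Re}(z r^*{}'(z)/r^*(z)) = (n-t) + \mathrm{Re}(z\tilde r'(z)/\tilde r(z))$ yields the $t$-sensitive lower bound
\[
|r^*{}'(z)| \geq \frac{1}{2}\left[|B'(z)| + \frac{n(1+k)-2t}{1+k}\right]|r(z)| \qquad\text{on } T_1.
\]
Coupling this with the Li--Mohapatra--Rodriguez identity $|r'(z)| + |r^*{}'(z)| \leq |B'(z)|\cdot||r||$ on $T_1$ gives a linear-in-$|r(z)|$ upper bound on $|r'(z)|$; I expect the sharper quadratic form of $(\ast)$ to fall out by re-running the derivation with the one-parameter family $r - \mu r^*$ ($|\mu| < 1$) and optimizing over $\mu$, in the spirit of the Aziz--Zarger proof of Theorem~\ref{thm-azizzar}.

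For the shift step, the minimum modulus principle applied to $r$ on $\{|z| \leq k\}$ (with small disks excised around any poles $a_j$ in $D_{k-}$) gives $|r(z)| \geq m$ there. For $\lambda \in \C$ with $|\lambda| < 1$, the shifted function $F_\lambda(z) := r(z) - \lambda m \in \RR$ has no zeros in $D_{k-}$, the same $n$ poles as $r$, and exactly $n$ zeros. Applying $(\ast)$ to $F_\lambda$ (taking care to track the $t$-dependence through the factor $z^{n-t}$ in the reflection, rather than naively substituting the zero count of $F_\lambda$), passing to the limit $|\lambda| \to 1^-$ with the phase of $\lambda$ chosen so that $|r(z) - \lambda m| = |r(z)| - m$ at the target $z$, and using the Chebyshev-type estimate $||F_\lambda|| \leq ||r|| - m$ (to be established separately) should yield \eqref{eqthm7}. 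The extremal $r(z) = (z+k)^t/(z-a)^n$ has $m = 0$ and $|r(1)| = ||r|| = (1+k)^t/(a-1)^n$; equality in \eqref{eqthm7} at $z = 1$ is then verified by direct substitution using $|B'(1)| = n(a+1)/(a-1)$.

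I anticipate two main obstacles. The first is closing the gap between the linear and quadratic forms in the derivation of $(\ast)$: the naive combination of Li--Mohapatra--Rodriguez with the lower bound on $|r^*{}'|$ overshoots the target by $\frac{1}{2}|B'(z)|(||r|| - |r(z)|)$, and recovering the quadratic form will require an additional optimization argument. The second is preserving the original $t$-dependence through the shift $r \mapsto F_\lambda$, since $F_\lambda$ has $n$ (rather than $t$) zeros and a naive application of $(\ast)$ to $F_\lambda$ returns only the weaker $t = n$ coefficient $n(k-1)/(k+1)$; the $t$-sensitive term must therefore be carried either through a more delicate shift respecting the decomposition $r^* = z^{n-t}\tilde r$ or via an auxiliary estimate. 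The estimate $||F_\lambda|| \leq ||r||-m$ is likewise not automatic from the triangle inequality and will need a separate Schwarz-type argument.
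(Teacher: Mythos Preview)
Your two–stage plan (first establish the $m=0$ inequality $(\ast)$, then shift by $\lambda m$) is structurally different from the paper's argument and introduces unnecessary detours. The paper does \emph{not} separate these steps: it sets $R(z)=r(z)-\alpha m$ from the outset, applies Lemma~\ref{lem3}(i) to $R$, and then exploits the boundary identity
\[
|z(R^{*})'(z)| \;=\; \bigl|\,|B'(z)|\,R(z)-zR'(z)\,\bigr|\qquad(z\in T_1),
\]
squaring which and inserting the bound on $\mathrm{Re}\bigl(zR'(z)/R(z)\bigr)$ gives directly
\[
|(R^{*})'(z)|^{2}\;\geq\;|R'(z)|^{2}+\frac{n(1+k)-2t}{1+k}\,|R(z)|^{2}\,|B'(z)|.
\]
This squaring device is the missing idea behind your ``linear $\to$ quadratic'' obstacle: the quadratic term in $|r(z)|$ appears automatically, with no $r-\mu r^{*}$ optimization needed. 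Your $\|F_\lambda\|\le\|r\|-m$ worry is likewise sidestepped: the paper applies Lemma~\ref{lem1} to $r$ itself, obtaining $|(r^{*})'(z)|\le |B'(z)|\,\|r\|-|r'(z)|$, and handles the $m$–contribution on the $r^{*}$ side by choosing $\arg\alpha$ so that $|(r^{*})'(z)-\bar\alpha m B'(z)|=|(r^{*})'(z)|-|\alpha|m|B'(z)|$; no Chebyshev bound on the shifted function is ever invoked.

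Your second obstacle, however, is genuine and is precisely the step the paper passes over without comment. When $m>0$ and $t<n$, the numerator of $R=(p-\alpha m\,w)/w$ has degree $n$, so $R$ has $n$ zeros in $T_k\cup D_{k+}$, not $t$; Lemma~\ref{lem3}(i) applied to $R$ therefore yields the bound with $n$ in place of $t$, i.e.\ only the coefficient $n(k-1)/(k+1)$ of Corollary~\ref{cor1}. The paper nonetheless writes the original $t$ in the displayed inequality for $\mathrm{Re}(zR'/R)$ and carries it through. Your instinct that ``a naive application \dots\ returns only the weaker $t=n$ coefficient'' and that retaining the $t$–dependence would require a more delicate shift is exactly right; neither your proposal nor the paper's argument supplies that extra ingredient, and indeed one should test the stated inequality on an example such as $r(z)=1/(z-a)$ (so $n=1$, $t=0$, $m>0$) before investing further effort in that direction.
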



	From Theorem \ref{thm1}, if $r$ has all its zeros lie in $T_k \cup D_{k+}$ with at least one zero on $T_k$, then we obtain the following corollary.
\begin{corollary} 	\label{cor1.1} 
		Let $r \in \mathcal{R}_n$, where $r$ has exactly $n$ poles at $a_1, a_2, \ldots, a_n$ and all its zeros lie in $T_k \cup D_{k+}$ with at least one zero on $T_k$, where $k \geq 1.$  Then for $z\in T_1$,
\begin{equation}
|r'(z)| \leq \dfrac{1}{2}\left[|B'(z)| - \dfrac{n(1+k)-2t}{1+k}\cdot\dfrac{|r(z)|^2}{||r||^2}\right]\cdot||r||, \label{eqthm7}
\end{equation}
		where $t$ is the number of zeros of $r$ with counting multiplicity. Equality holds for $r(z) = \dfrac{(z+k)^t}{(z-a)^n}$ and \mbox{$B(z) = \left(\dfrac{1-az}{z-a}\right)^n$} evaluated at $z=1$, where $a >1, k\geq 1.$
\end{corollary}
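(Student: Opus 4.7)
The plan is to deduce Corollary \ref{cor1.1} as an immediate specialization of Theorem \ref{thm1}. Since Theorem \ref{thm1} is stated as available to us, the only work is to identify the value of the parameter $m = \min_{|z|=k}|r(z)|$ under the stronger hypothesis of the corollary and then substitute.

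First, I would observe that the hypothesis ``$r$ has at least one zero on $T_k$'' forces $m = 0$. Indeed, if $z_0 \in T_k$ satisfies $r(z_0) = 0$, then $\min_{|z|=k}|r(z)| \leq |r(z_0)| = 0$, and since the minimum of a nonnegative quantity is nonnegative, we conclude $m = 0$. This is the only substantive step, and it is essentially just reading off the definition.

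Next, I would substitute $m = 0$ into the inequality \eqref{eqthm7} of Theorem \ref{thm1}. The factor $(||r|| - m)$ outside becomes $||r||$, the numerator $(|r(z)| - m)^2$ becomes $|r(z)|^2$, and the denominator $(||r|| - m)^2$ becomes $||r||^2$. This yields
\[
|r'(z)| \leq \dfrac{1}{2}\left[|B'(z)| - \dfrac{n(1+k)-2t}{1+k}\cdot\dfrac{|r(z)|^2}{||r||^2}\right]\cdot||r||,
\]
which is precisely the desired inequality.

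Finally, for the equality statement, I would note that the extremal function $r(z) = (z+k)^t/(z-a)^n$ from Theorem \ref{thm1} already has all its zeros at $z = -k \in T_k$, so it automatically satisfies the additional hypothesis of the corollary and serves as the extremal here as well. There is no real obstacle in this argument; the corollary is a direct corollary in the literal sense, and the only thing to verify carefully is that the presence of a zero on $T_k$ indeed makes $m$ vanish.
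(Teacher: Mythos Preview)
Your proposal is correct and matches the paper's own treatment: the paper derives Corollary~\ref{cor1.1} directly from Theorem~\ref{thm1} by noting that a zero on $T_k$ forces $m=\min_{|z|=k}|r(z)|=0$, and then substituting $m=0$ into \eqref{eqthm7}. Your observation about the extremal function already having its zeros on $T_k$ is also exactly what is needed for the equality statement.
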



As an immediate consequence of Theorem \ref{thm1}, we have the following generalization of Theorem \ref{thm-azizshah1} and Theorem \ref{thm-azizzar}, where $r$ has exactly $n$ zeros in $T_1 \cup D_{1+}$ and $r$ has exactly $n$ zeros in $T_k \cup D_{k+}, k\geq 1$, respectively.

\begin{corollary}\label{cor1} 
		Let $r(z)=p(z)/w(z) \in \mathcal{R}_n$ where $p(z)$ is a polynomial of degree $n$ and all its zeros lie in $T_k \cup D_{k+}$, $k \geq 1.$ 
		Then for $z \in T_1$,
				$$|r'(z)| \leq \dfrac{1}{2}\left[ |B'(z)| - \dfrac{n(k-1)(|r(z)|-m)^2}{(k+1)(||r||-m)^2}\right] (||r||-m),$$
		 where $\displaystyle m=\min_{|z|=k} |r(z)|.$ Equality holds for $r(z) = \left(\dfrac{z+k}{z-a}\right)^n$ and \mbox{$B(z) = \left(\dfrac{1-az}{z-a}\right)^n$} evaluated at $z=1$, where $a >1, k\geq 1.$
\end{corollary}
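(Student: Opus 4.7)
The plan is to derive Corollary \ref{cor1} as a direct specialization of the Main Theorem \ref{thm1}. The hypothesis of Corollary \ref{cor1} is that $r = p/w$ with $p$ of degree exactly $n$, and that all zeros of $p$ (hence all zeros of $r$) lie in $T_k \cup D_{k+}$ with $k \geq 1$. Since $w$ has $n$ zeros and contributes the $n$ poles of $r$, and since $r$ has exactly $n$ poles, this places us in the hypotheses of Theorem \ref{thm1} with the additional information that $t$, the number of zeros of $r$ counted with multiplicity, equals $n$.

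First I would verify that all hypotheses of Theorem \ref{thm1} are satisfied: $r \in \mathcal{R}_n$, $r$ has exactly $n$ poles at $a_1,\ldots,a_n$ (the zeros of $w$), and all its zeros lie in $T_k \cup D_{k+}$ for $k\geq 1$. Then I would invoke Theorem \ref{thm1} directly, obtaining for $z \in T_1$
\begin{equation*}
|r'(z)| \leq \frac{1}{2}\left[|B'(z)| - \frac{(n(1+k)-2t)(|r(z)|-m)^2}{(1+k)(\|r\|-m)^2}\right](\|r\|-m).
\end{equation*}

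Next I would substitute $t = n$, so that the coefficient in the bracketed expression simplifies as
\begin{equation*}
n(1+k) - 2t = n(1+k) - 2n = n(k-1),
\end{equation*}
which yields exactly the claimed inequality. Finally, for the extremal case, I would note that the equality statement transfers verbatim from Theorem \ref{thm1} upon setting $t=n$, since $(z+k)^t/(z-a)^n$ becomes $((z+k)/(z-a))^n$.

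There is essentially no obstacle here, as Corollary \ref{cor1} is an immediate consequence of Theorem \ref{thm1} by setting $t=n$; the only thing to check is that the degree condition on $p$ guarantees $t=n$, and that the extremal rational function agrees with the one exhibited in the corollary. The substantive content lies entirely in Theorem \ref{thm1} itself.
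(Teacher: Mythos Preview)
Your proposal is correct and matches the paper's approach exactly: the paper presents Corollary~\ref{cor1} as an immediate consequence of Theorem~\ref{thm1}, obtained by taking $t=n$ (since $p$ has degree exactly $n$), which reduces $n(1+k)-2t$ to $n(k-1)$ and specializes the extremal function accordingly.
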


\indent  In particular, for $k=1$, Corollary \ref{cor1} reduces to Theorem \ref{thm-azizshah1}  and for $m=0$, Corollary \ref{cor1} reduces to Theorem \ref{thm-azizzar}.
 
The next theorem establishes an estimate of a lower bound of the modulus of the derivative of $r(z)$ on the unit circle when all zeros of $r$ lie in $|z| \leq  k \leq  1.$
\begin{theorem}[Main] \label{thm2}
			Let $r \in \mathcal{R}_n$, where $r$ has exactly $n$ poles at $a_1, a_2, \ldots, a_n$ and all its zeros lie in $T_k \cup D_{k-}$, where $k \leq 1.$				Then for $z\in T_1$, 
\begin{equation}
|r'(z)| \geq \dfrac{1}{2}\left[ |B'(z)| + \dfrac{2t-n(1+k)}{(1+k)}\right]\cdot(|r(z)|+m), \label{eqthm8}
\end{equation}						
			where $t$ is the number of zeros of $r$ with counting multiplicity and $\displaystyle m=\min_{|z|=k} |r(z)|.$ Equality holds for $r(z) = \dfrac{(z+k)^t}{(z-a)^n}$ and \mbox{$B(z) = \left(\dfrac{1-az}{z-a}\right)^n$} evaluated at $z=1$, where $a > 1, k\leq 1.$
\end{theorem}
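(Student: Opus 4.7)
The plan is to extend the shifting argument used by Aziz and Shah for Theorem~\ref{thm-azizshah2}, now with Theorem~\ref{thm-azizshahnew} playing the role of the basic lower bound \eqref{eq1.5}, so that both $k$ and the zero count $t$ are accounted for. If $m = 0$, then \eqref{eqthm8} reduces to \eqref{eq1.7} and the bound is already contained in Theorem~\ref{thm-azizshahnew}; so assume $m > 0$, whence $r$ has no zeros on $T_k$ and all $t$ zeros of $r$ lie strictly inside $D_{k-}$.

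For each $\lambda \in \C$ with $|\lambda| < 1$, I would introduce the auxiliary rational function
\[
F(z) \;:=\; r(z) - \lambda m,
\]
which has the same $n$ poles $a_1, \dots, a_n$ as $r$ and whose numerator $p(z) - \lambda m\, w(z)$ is a polynomial of degree $n$. Since $|r(z)| \geq m > |\lambda m|$ on $T_k$, Rouch\'e's theorem applied to the numerator shows that $F$ has exactly the same number of zeros in $D_{k-}$ as $r$, namely $t$. Granting for the moment that $F$ satisfies the hypotheses of Theorem~\ref{thm-azizshahnew}, that theorem, together with $F'(z) = r'(z)$, yields
\[
|r'(z)| \;\geq\; \tfrac{1}{2}\!\left[|B'(z)| + \tfrac{2t - n(1+k)}{1+k}\right]\!\bigl|r(z) - \lambda m\bigr| \qquad (z \in T_1).
\]

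To finish, I fix $z \in T_1$ with $r(z) \neq 0$ and take $\lambda = -\mu\, r(z)/|r(z)|$ for $\mu \in [0,1)$; then $r(z) - \lambda m = r(z)\bigl(1 + \mu m/|r(z)|\bigr)$, so $|r(z) - \lambda m| = |r(z)| + \mu m$. Letting $\mu \to 1^{-}$, and using continuity to cover any points at which $r$ vanishes on $T_1$, produces the desired inequality \eqref{eqthm8}.

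The step I expect to be the chief obstacle is the application of Theorem~\ref{thm-azizshahnew} to $F$. That theorem requires \emph{all} $n$ zeros of $p - \lambda m\, w$ to lie in $T_k \cup D_{k-}$, yet Rouch\'e only locates the $t$ inherited from $r$; the remaining $n - t$ zeros surface in $D_{k+}$. When $r$ has exactly $n$ zeros (the case $t = n$), Rouch\'e automatically accounts for every zero of $F$ and the hypothesis is immediate, so the argument above completes the proof. For general $t < n$ one has to argue further, for instance by approximating $r$ by nearby rational functions having exactly $n$ zeros in $T_k \cup D_{k-}$ and passing to the limit, or by bypassing Theorem~\ref{thm-azizshahnew} altogether: one can repeat the logarithmic-derivative estimate underlying that theorem for $F$ itself, so that $\operatorname{Re}(z F'(z)/F(z))$ is bounded below by $\tfrac{1}{2}\bigl[|B'(z)| + (2t-n(1+k))/(1+k)\bigr]$ after carefully controlling the contribution of the $n - t$ outer zeros of $F$.
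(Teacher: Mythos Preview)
Your plan is exactly the paper's argument: set $R(z)=r(z)+\alpha m$ for $|\alpha|<1$, invoke Rouch\'e on $T_k$, apply Lemma~\ref{lem3}(ii) (the logarithmic-derivative estimate underlying Theorem~\ref{thm-azizshahnew}) to $R$ to obtain $|r'(z)|=|R'(z)|\ge\tfrac12\bigl[|B'(z)|+\tfrac{2t-n(1+k)}{1+k}\bigr]\,|r(z)+\alpha m|$, choose $\arg\alpha$ so that $|r(z)+\alpha m|=|r(z)|+|\alpha|m$, and let $|\alpha|\to1$. Apart from the sign of the perturbation and the fact that the paper quotes Lemma~\ref{lem3}(ii) directly rather than the packaged Theorem~\ref{thm-azizshahnew}, the two arguments coincide.

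On your ``chief obstacle'': the paper does not resolve it. After Rouch\'e the paper simply asserts that $R$ ``has no zeros in $|z|>k$'' and applies Lemma~\ref{lem3}(ii) with parameter $t$. But when $t<n$ the numerator $p+\alpha m\,w$ of $R$ has degree $n$, so $R$ necessarily acquires $n-t$ zeros in $D_{k+}$, exactly as you observe, and the hypothesis of Lemma~\ref{lem3}(ii) fails for $R$. Thus the paper's proof, as written, is complete only for $t=n$, which is precisely the case you say goes through cleanly; your proposed remedies for $t<n$ (approximation, or tracking the outer zeros in the logarithmic-derivative identity) go beyond what the paper supplies.
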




As an immediate consequence of Theorem \ref{thm2}, we have the following generalization of Theorem \ref{thm-azizshah2}, where $r$ has exactly $n$ zeros in $T_1 \cup D_{1-}.$

\begin{corollary} \label{cor2} 
			Let $r(z)=p(z)/w(z) \in \mathcal{R}_n$ where $p(z)$ is a polynomial of degree $n$ and all its zeros lie in $T_k \cup D_{k-}$, $k \leq 1.$ 
			Then for $z \in T_1$,
					$$|r'(z)| \geq \dfrac{1}{2}\left[ |B'(z)| + \dfrac{n(1-k)}{(1+k)}\right]\cdot(|r(z)|+m),$$
			 where $\displaystyle m=\min_{|z|=k} |r(z)|.$ Equality holds for $r(z) = \left(\dfrac{z+k}{z-a}\right)^n$ and \mbox{$B(z) = \left(\dfrac{1-az}{z-a}\right)^n$} evaluated at $z=1$, where $a > 1, k\leq 1.$
\end{corollary}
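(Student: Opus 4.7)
The plan is to derive Corollary \ref{cor2} directly from Theorem \ref{thm2}, which is already available to us in the paper. Since $r(z)=p(z)/w(z)$ with $p$ a polynomial of degree exactly $n$, the rational function $r$ has precisely $n$ zeros (counted with multiplicity). Thus the parameter $t$ appearing in Theorem \ref{thm2} equals $n$ in this setting.

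First, I would verify that the hypotheses of Theorem \ref{thm2} are satisfied: $r\in\mathcal{R}_n$ has exactly $n$ poles at $a_1,\ldots,a_n$ (since the $a_j$ are its poles by construction), and the assumption that all zeros of $r$ lie in $T_k\cup D_{k-}$ with $k\leq 1$ is identical. Then I would substitute $t=n$ into inequality \eqref{eqthm8}. The quantity in brackets becomes
\begin{equation*}
|B'(z)| + \frac{2n-n(1+k)}{1+k} \;=\; |B'(z)| + \frac{n(1-k)}{1+k},
\end{equation*}
which is exactly the expression appearing in Corollary \ref{cor2}. Multiplying by $\tfrac12(|r(z)|+m)$ yields the claimed lower bound.

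Finally, I would check the extremal function. In Theorem \ref{thm2} equality holds for $r(z)=(z+k)^t/(z-a)^n$; substituting $t=n$ produces $r(z)=\bigl((z+k)/(z-a)\bigr)^n$, which is precisely the extremal given in the statement of Corollary \ref{cor2}, with the same Blaschke product $B(z)=\bigl((1-az)/(z-a)\bigr)^n$ evaluated at $z=1$. There is no genuine obstacle here beyond this arithmetic simplification; the entire content of the corollary is the specialization of Theorem \ref{thm2} to the case where the numerator of $r$ has maximal degree.
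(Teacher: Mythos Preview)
Your proposal is correct and matches the paper's own treatment: the paper presents Corollary~\ref{cor2} explicitly as an immediate consequence of Theorem~\ref{thm2}, obtained by taking $t=n$ when $p$ has degree exactly $n$. Your verification that $r$ indeed has exactly $n$ poles (since the zeros of $p$ lie in $|z|\le k\le 1$ while $|a_j|>1$, so no cancellation occurs) is a nice point to make explicit.
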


\indent In particular, for $k=1$, Corollary \ref{cor2} reduces to Theorem \ref{thm-azizshah2}, and 
for $m=0$, Theorem \ref{thm2} reduces to Theorem \ref{thm-azizshahnew}. 

\section{Lemmas}
For the proof of our main theorems, we need the following lemmas. These three Lemmas are due to X. Li, R. N. Mohapatra and  R. S. Rodriguez \cite{Li}.
\begin{lem} \label{lem1}\cite{Li}
		If $r \in \mathcal{R}_n$ and $r^*(z) = B(z)\overline{r(1/\overline{z})}$, then for $z \in T_1$,
					$$|\left(r^*(z)\right)'| + |r'(z)| \leq |B'(z)|\cdot||r||.$$
		Equality holds for $r(z) = \lambda B(z)$ with $\lambda  \in T_1.$
\end{lem}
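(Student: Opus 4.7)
The strategy is to reduce the two-term inequality to a single Bernstein-type bound on a carefully chosen auxiliary rational function. Fix $z_0 \in T_1$ and choose a unimodular constant $\lambda$ (depending on $z_0$) so that the complex numbers $r'(z_0)$ and $\lambda(r^*)'(z_0)$ have the same argument; with this choice, the function
$$F(z) := r(z) + \lambda r^*(z) \in \mathcal{R}_n$$
has the same poles as $r$ and satisfies $|F'(z_0)| = |r'(z_0)| + |(r^*)'(z_0)|$ by construction.

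Next I would extract the symmetry of $F$. A short computation using the involutivity $r^{**} = r$ (a consequence of $\overline{B(1/\bar z)} = 1/B(z)$) yields
$$F^*(z) = r^*(z) + \bar\lambda \, r(z) = \bar\lambda \, F(z),$$
so $F$ is self-reciprocal up to a phase. Two consequences follow: $|F'(z)| = |(F^*)'(z)|$ for every $z$; and since $|r^*| = |r|$ on $T_1$, the triangle inequality gives $||F|| \leq \sup_{T_1}(|r| + |r^*|) = 2 \, ||r||$.

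The crux of the argument is a Bernstein-type bound for self-reciprocal rational functions: for any $F \in \mathcal{R}_n$ satisfying $F^* = \bar\lambda F$ and any $z_0 \in T_1$,
$$|F'(z_0)| \leq \frac{1}{2}|B'(z_0)| \cdot ||F||.$$
One way to establish this is to exploit the identity $F(z) = \lambda B(z)\,\overline{F(z)}$ on $T_1$, which forces $F(z)^2/B(z)$ to be a unimodular multiple of a nonnegative function there. Combined with the relation $zB'(z)/B(z) = |B'(z)|$ on $T_1$, a maximum-modulus argument applied to $F^2/B$ produces the stated estimate. Alternatively, one may factor out a Blaschke product that absorbs the symmetric pairs of zeros $\{z_j,\, 1/\overline{z_j}\}$ of $F$, recasting $F$ so that the hypothesis of Theorem \ref{thm-li} applies.

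Combining the three steps gives $|r'(z_0)| + |(r^*)'(z_0)| = |F'(z_0)| \leq \frac{1}{2}|B'(z_0)| \cdot ||F|| \leq |B'(z_0)| \cdot ||r||$, and since $z_0 \in T_1$ was arbitrary, the inequality holds throughout $T_1$. Tracing when each inequality in the chain is tight identifies the extremal case $r = \lambda B$ with $|\lambda| = 1$. The principal obstacle is the Bernstein-type estimate for self-reciprocal rational functions; the remaining steps are algebraic and rely only on the basic facts $|r| = |r^*|$ on $T_1$ and $r^{**} = r$.
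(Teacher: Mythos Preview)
The paper does not supply a proof of this lemma; it is quoted from \cite{Li} and used as a black box in the proof of Theorem~\ref{thm1}. So there is no ``paper's own proof'' to compare against, and your proposal must be judged on its own.

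Your reduction via $F = r + \lambda r^*$ is sound: the involutivity $r^{**}=r$, the self-reciprocality $F^* = \bar\lambda F$, the norm bound $\|F\|\le 2\|r\|$, and the phase alignment giving $|F'(z_0)|=|r'(z_0)|+|(r^*)'(z_0)|$ are all correct. The difficulty, as you yourself flag, is the inequality $|F'(z_0)|\le \tfrac12|B'(z_0)|\,\|F\|$ for self-reciprocal $F$, and here both of your suggested routes run into trouble.

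The maximum-modulus route does not work as stated: $F^2/B$ has poles at the zeros $1/\bar a_j$ of $B$, all of which lie in $D_{1-}$, so no maximum principle is available on the closed unit disk. The identity $F^2/B=\lambda|F|^2$ on $T_1$ is correct but, once unpacked, only yields $\mathrm{Re}\bigl(zF'(z)/F(z)\bigr)=|B'(z)|/2$ on $T_1$; the imaginary part of $zF'/F$ remains uncontrolled, so $|F'|$ is not yet bounded by $\tfrac12|B'|\,\|F\|$. The alternative route through Theorem~\ref{thm-li} is circular: in \cite{Li} that theorem is derived \emph{from} the present lemma, and in this paper the generalization (Theorem~\ref{thm1}) likewise invokes Lemma~\ref{lem1} at the decisive step. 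Factoring the symmetric zero pairs of $F$ and appealing to Theorem~\ref{thm-li} therefore assumes what you are trying to prove.

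In short, the architecture of your argument is reasonable, but the self-reciprocal Bernstein estimate is a genuine gap, not a routine step, and neither of the two mechanisms you sketch closes it. The original proof in \cite{Li} proceeds by a different comparison/Rouch\'e argument that avoids this circularity.
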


\begin{lem} \label{lem1.1}\cite{Li}
		Let $z\in \C$. Then
				$$\text{\emph{Re}}(z) \leq \dfrac{1}{2} \quad \text{if and only if} \quad |z|\leq |z-1|.$$
		Moreover, the statement holds when $\leq$ is replaced by $<$ at each occurrence.
\end{lem}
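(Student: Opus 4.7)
The plan is to reduce the equivalence to a one-line algebraic identity by passing to Cartesian coordinates. Writing $z = x + iy$ with $x$ and $y$ real, I would first compute
\[
|z-1|^{2} - |z|^{2} = (x-1)^{2} + y^{2} - (x^{2} + y^{2}) = 1 - 2x = 1 - 2\,\mathrm{Re}(z).
\]
Since $|z|$ and $|z-1|$ are both nonnegative, the inequality $|z| \leq |z-1|$ is equivalent to $|z|^{2} \leq |z-1|^{2}$, which by the identity above rearranges to $1 - 2\,\mathrm{Re}(z) \geq 0$, i.e., $\mathrm{Re}(z) \leq \tfrac{1}{2}$. This yields both directions of the main equivalence simultaneously, with no need for separate arguments.

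The strict version requires no new ideas: squaring preserves strict inequality between nonnegative numbers, so $|z| < |z-1|$ is equivalent to $|z|^{2} < |z-1|^{2}$, hence to $1 - 2\,\mathrm{Re}(z) > 0$, hence to $\mathrm{Re}(z) < \tfrac{1}{2}$. A single calculation thus handles the non-strict and strict forms at once.

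There is no real obstacle here; the result is a one-step computation. The only subtlety worth noting is that the passage from $|z| \leq |z-1|$ to $|z|^{2} \leq |z-1|^{2}$ (and likewise for the strict form) relies on both sides being nonnegative, which is automatic for moduli. Geometrically, the statement just records that the perpendicular bisector of the segment from $0$ to $1$ is the vertical line $\mathrm{Re}(z) = 1/2$, which separates the plane into the two half-planes characterized by proximity to $0$ versus to $1$. This picture is useful for intuition, but the algebraic verification above gives the cleanest formal proof.
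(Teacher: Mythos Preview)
Your argument is correct and is the natural way to establish this elementary equivalence. Note, however, that the paper does not actually supply its own proof of this lemma: it is simply quoted from the reference \cite{Li}, so there is nothing to compare your approach against. Your Cartesian-coordinate computation (equivalently, the perpendicular-bisector observation) is exactly the standard justification one would expect, and it handles both the non-strict and strict versions cleanly.
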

\begin{remark} \label{rem1.1} 
			Similar to the proof of Lemma \ref{lem1.1}, we can replace $\leq$ by $\geq$ and obtain that
					$$ \text{Re}\;(z) \geq \dfrac{1}{2} \quad \text{if and only if} \quad |z|\geq |z-1|.$$
\end{remark}

\begin{lem} \label{lem1.2} \cite{Li}
Let $r\in \mathcal{R}_n$. 
	\begin{enumerate}
			\item[(i)] 		If all zeros of $r$ lie in $T_1 \cup D_{1+}$, then for $z \in T_1$,
								$$\text{\emph{Re}}\left(\dfrac{zr'(z)}{r(z)}\right) \leq \dfrac{|B'(z)|}{2},$$  where $r(z) \neq 0$.
			\item[(ii)] 		If $r$ has exactly $n$ poles at $a_1,a_2,\ldots,a_n$ and all its zeros lie in $T_1 \cup D_{1-}$, then for $z \in T_1$,
								$$\text{\emph{Re}}\left(\dfrac{zr'(z)}{r(z)}\right) \geq \dfrac{|B'(z)|}{2} - \dfrac{1}{2}(n-t),$$
								where $t$ is the number of zeros of $r$ with counting multiplicity and $r(z) \neq 0$.
	\end{enumerate}
\end{lem}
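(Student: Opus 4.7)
The plan is to analyze $\text{Re}\bigl(zr'(z)/r(z)\bigr)$ directly via the logarithmic derivative of $r$. Writing $r(z)=p(z)/w(z)$ with $p(z)=c\prod_{k=1}^{t}(z-z_k)$, where $z_1,\ldots,z_t$ are the zeros of $r$ counted with multiplicity, one obtains
\begin{equation*}
\frac{zr'(z)}{r(z)}=\sum_{k=1}^{t}\frac{z}{z-z_k}-\sum_{j=1}^{n}\frac{z}{z-a_j}.
\end{equation*}
The whole argument then reduces to controlling $\text{Re}\bigl(z/(z-\alpha)\bigr)$ on $T_1$ according to whether $|\alpha|\geq 1$ or $|\alpha|\leq 1$.

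The basic pointwise estimate comes from Lemma \ref{lem1.1} applied to $w=z/(z-\alpha)$. Since $w-1=-\alpha/(z-\alpha)$, on $T_1$ one has $|w|=1/|z-\alpha|$ and $|w-1|=|\alpha|/|z-\alpha|$, so $|w|\leq |w-1|$ is equivalent to $|\alpha|\geq 1$. Lemma \ref{lem1.1} therefore yields $\text{Re}\bigl(z/(z-\alpha)\bigr)\leq 1/2$ when $|\alpha|\geq 1$, and Remark \ref{rem1.1} provides the reverse inequality when $|\alpha|\leq 1$. Applied to the poles (where $|a_j|>1$), together with the classical Blaschke identity
\begin{equation*}
|B'(z)|=\frac{zB'(z)}{B(z)}=\sum_{j=1}^{n}\frac{|a_j|^2-1}{|z-a_j|^2},\qquad z\in T_1,
\end{equation*}
which follows from logarithmic differentiation of $B$ combined with the relation $(z-a_j)(1-\overline{a_j}z)=z|z-a_j|^2$ on $T_1$, this gives the clean formula
\begin{equation*}
\sum_{j=1}^{n}\text{Re}\frac{z}{z-a_j}=\frac{n}{2}-\frac{|B'(z)|}{2}.
\end{equation*}

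For part (i) the zeros $z_k$ satisfy $|z_k|\geq 1$, hence $\text{Re}\bigl(z/(z-z_k)\bigr)\leq 1/2$ for each $k$; summing and invoking $t\leq n$ (since $\deg p\leq n$),
\begin{equation*}
\text{Re}\frac{zr'(z)}{r(z)}\leq\frac{t}{2}-\frac{n}{2}+\frac{|B'(z)|}{2}\leq\frac{|B'(z)|}{2}.
\end{equation*}
For part (ii) the hypothesis that $r$ has exactly $n$ poles prevents any pole cancellation, so the pole sum retains its full length $n$, and $|z_k|\leq 1$ yields the reversed inequality for the zero terms; the same manipulation then produces
\begin{equation*}
\text{Re}\frac{zr'(z)}{r(z)}\geq\frac{|B'(z)|}{2}-\frac{n-t}{2}.
\end{equation*}
The only step requiring genuine care is the Blaschke identity for $|B'(z)|$ and the fact that $zB'(z)/B(z)$ is a positive real on $T_1$; once that is in hand, both assertions reduce to straightforward summation of the two elementary estimates for $\text{Re}\bigl(z/(z-\alpha)\bigr)$.
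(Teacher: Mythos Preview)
Your argument is correct. The paper does not itself prove Lemma~\ref{lem1.2} (it is quoted from \cite{Li}), but your proof is precisely the $k=1$ specialization of the paper's own proof of Lemma~\ref{lem3}: the same logarithmic-derivative decomposition, the same identity $\text{Re}\bigl(zw'(z)/w(z)\bigr)=(n-|B'(z)|)/2$ for the pole sum (which you rederive rather than cite as Lemma~\ref{lem2}), and the same termwise bound on $\text{Re}\bigl(z/(z-b_j)\bigr)$, though you obtain the latter uniformly via Lemma~\ref{lem1.1} whereas the paper splits into the cases $|b_j|=1$ and $|b_j|\neq 1$ (using a M\"obius-image argument in the second case to get the sharper constant $1/(1+k)$ needed for general $k$).
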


The next lemma is due to A. Aziz and B. A. Zarger \cite{AzizZar}.
\begin{lem} \label{lem2}\cite{AzizZar}
		If $z \in T_1$, then $$\text{\emph{Re}} \left( \dfrac{zw'(z)}{w(z)} \right) = \dfrac{n-|B'(z)|}{2}.$$
\end{lem}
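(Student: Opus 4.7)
The plan is to relate the logarithmic derivative of $w$ to that of $B$ on the unit circle, exploiting the symmetry $\bar z = 1/z$ for $z \in T_1$ to turn the ``reflected'' factors $1-\overline{a_j}z$ into the ``original'' factors $z-a_j$ after taking real parts.

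First, I would write the logarithmic derivatives explicitly:
\begin{equation*}
\frac{zw'(z)}{w(z)} = \sum_{j=1}^n \frac{z}{z-a_j}, \qquad \frac{zB'(z)}{B(z)} = \sum_{j=1}^n \left[ \frac{-\overline{a_j}z}{1-\overline{a_j}z} - \frac{z}{z-a_j}\right].
\end{equation*}
The key identity I would then establish is that, for $z \in T_1$, the first term in each summand for $B$ has the same real part as $\dfrac{-a_j}{z-a_j}$. Indeed, since $\bar z = 1/z$ on $T_1$, we have $\overline{1-\overline{a_j}z} = 1 - a_j/z = (z-a_j)/z$, which gives $\overline{(-\overline{a_j}z)/(1-\overline{a_j}z)} = -a_j/(z-a_j)$. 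Taking real parts (which are invariant under conjugation) and using the trivial algebraic identity $\dfrac{-a_j}{z-a_j} = 1 - \dfrac{z}{z-a_j}$ yields
\begin{equation*}
\operatorname{Re}\!\left(\frac{-\overline{a_j}z}{1-\overline{a_j}z}\right) = 1 - \operatorname{Re}\!\left(\frac{z}{z-a_j}\right).
\end{equation*}

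Summing over $j$, I obtain
\begin{equation*}
\operatorname{Re}\!\left(\frac{zB'(z)}{B(z)}\right) = n - 2\operatorname{Re}\!\left(\frac{zw'(z)}{w(z)}\right).
\end{equation*}
At this point I would invoke the fact, already recorded in the introduction, that $zB'(z)/B(z) = |B'(z)|$ on $T_1$ (so in particular its real part equals $|B'(z)|$); solving the displayed identity for $\operatorname{Re}(zw'(z)/w(z))$ gives the claimed formula.

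The only nontrivial step is the conjugation computation linking $1-\overline{a_j}z$ to $z-a_j$ on $T_1$; everything else is bookkeeping. Since the identity $zB'(z)/B(z) = |B'(z)|$ for $z \in T_1$ is granted by the paper, no further obstacle remains.
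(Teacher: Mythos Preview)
Your argument is correct: the logarithmic-derivative computations for $w$ and $B$ are accurate, the conjugation step $\overline{(-\overline{a_j}z)/(1-\overline{a_j}z)} = -a_j/(z-a_j)$ is valid for $z\in T_1$, and combining this with $zB'(z)/B(z)=|B'(z)|$ (which the paper records in its introduction) yields the stated identity.

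Note, however, that the paper does not supply its own proof of this lemma; it is simply quoted from Aziz and Zarger \cite{AzizZar}. So there is no in-paper argument to compare against. Your proof is a clean, self-contained justification and could stand in for the missing one.
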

We need the following preliminary result for the proofs of Theorem \ref{thm1} and Theorem \ref{thm2}. 
\begin{lem} \label{lem3} 
		Assume that $r \in \mathcal{R}_n$, where $r$ has exactly $n$ poles at $a_1, a_2, \ldots, a_n$.
 Let $t$ be the number of zeros of $r$ with counting multiplicity.
	\begin{itemize}
			\item[$(i)$]		If  all zeros of $r$ lie in $T_k \cup D_{k+}$ where $k \geq 1$, and $z \in T_1$ with $r(z) \neq 0$, then 
											$$\text{\emph{Re}}\left(\dfrac{zr'(z)}{r(z)}\right) \leq \dfrac{|B'(z)|}{2} + \dfrac{2t-n(1+k)}{2(1+k)}.$$

			\item[$(ii)$]		If  all zeros of $r$ lie in $T_k \cup D_{k-}$ where $k \leq 1$, and $z \in T_1$ with $r(z) \neq 0$, then 
											$$\text{\emph{Re}}\left(\dfrac{zr'(z)}{r(z)}\right) \geq \dfrac{|B'(z)|}{2} - \dfrac{n(1+k)-2t}{2(1+k)}.$$
	\end{itemize}			
\end{lem}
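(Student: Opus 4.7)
The plan is to reduce both inequalities to a single pointwise estimate on $\mathrm{Re}(z/(z-z_j))$ for $z\in T_1$, by splitting $r$ into its zero and pole factors and invoking Lemma~\ref{lem2}. Since $r$ has exactly $n$ poles at $a_1,\ldots,a_n$ and $t$ zeros $z_1,\ldots,z_t$ (counted with multiplicity), I would write $r(z)=p(z)/w(z)$ with $p(z)=c\prod_{j=1}^{t}(z-z_j)$ for some nonzero constant $c$, so that logarithmic differentiation gives
\[
\frac{zr'(z)}{r(z)} \;=\; \sum_{j=1}^{t}\frac{z}{z-z_j} \;-\; \frac{zw'(z)}{w(z)}.
\]
Taking real parts on $T_1$ and applying Lemma~\ref{lem2},
\[
\mathrm{Re}\!\left(\frac{zr'(z)}{r(z)}\right) \;=\; \sum_{j=1}^{t}\mathrm{Re}\!\left(\frac{z}{z-z_j}\right) \;-\; \frac{n-|B'(z)|}{2}.
\]
Comparing with the target, it therefore suffices to prove the pointwise bound $\mathrm{Re}(z/(z-z_j))\le 1/(1+k)$ when $|z_j|\ge k\ge 1$ (for part $(i)$), and $\mathrm{Re}(z/(z-z_j))\ge 1/(1+k)$ when $|z_j|\le k\le 1$ (for part $(ii)$); summing $t$ such inequalities and rearranging then gives exactly the claims of the lemma.

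For the pointwise estimates, I would set $W=(1+k)z/(2(z-z_j))$. By Lemma~\ref{lem1.1} (respectively Remark~\ref{rem1.1}), the inequality $\mathrm{Re}(W)\le 1/2$ (respectively $\ge 1/2$) is equivalent to $|W|\le |W-1|$ (respectively $|W|\ge |W-1|$). For $z\in T_1$ one finds $|W|=(1+k)/(2|z-z_j|)$ and $|W-1|=|(k-1)z+2z_j|/(2|z-z_j|)$, so everything reduces to comparing $|(k-1)z+2z_j|$ with $1+k$. In case $(i)$, with $k\ge 1$ and $|z_j|\ge k$, the reverse triangle inequality yields
\[
|(k-1)z+2z_j|\;\ge\; 2|z_j|-(k-1)\;\ge\; 2k-(k-1)\;=\;k+1;
\]
in case $(ii)$, with $k\le 1$ and $|z_j|\le k$, the ordinary triangle inequality yields
\[
|(k-1)z+2z_j|\;\le\; (1-k)+2|z_j|\;\le\;(1-k)+2k\;=\;k+1.
\]

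The hard part, and the real content of the argument, is guessing the correct normalization $W=(1+k)z/(2(z-z_j))$: with this choice Lemma~\ref{lem1.1} converts the sought real-part estimate into a modulus comparison that the triangle inequality disposes of in one line, and the prefactor $(1+k)$ is precisely what is needed for the constants $2t-n(1+k)$ and $n(1+k)-2t$ to fall out correctly. Everything else is bookkeeping.
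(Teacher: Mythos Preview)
Your reduction via logarithmic differentiation and Lemma~\ref{lem2} is identical to the paper's, so the only place the arguments diverge is in the pointwise estimate $\mathrm{Re}\bigl(z/(z-z_j)\bigr)\lessgtr 1/(1+k)$ on $T_1$. The paper obtains this by splitting into the cases $|b_j|=1$ and $|b_j|\neq 1$: in the boundary case it applies Lemma~\ref{lem1.1} (or Remark~\ref{rem1.1}) directly to $z/(z-b_j)$, while in the strict case it identifies the image of $T_1$ under the M\"obius map $z\mapsto z/(z-b_j)$ as an explicit circle and reads off the extremal real part from its center and radius. Your normalization $W=(1+k)z/\bigl(2(z-z_j)\bigr)$ instead converts the desired inequality, via Lemma~\ref{lem1.1}, into the modulus comparison $1+k\lessgtr |(k-1)z+2z_j|$, which the (reverse) triangle inequality handles in one line. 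This is a genuinely different and somewhat slicker route: it treats the boundary and strict cases uniformly and avoids computing the image circle altogether, at the cost of having to guess the right scaling factor in $W$. Both arguments are correct and of comparable length; yours is more self-contained, while the paper's makes the geometry of the M\"obius image explicit.
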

\begin{proof} 
Let $r(z)=\dfrac{p(z)}{w(z)} \in \mathcal{R}_n.$ \\
If $b_1, b_2, \ldots, b_t$ are all zeros (may be not distinct) of $p(z)$, then $t \leq n$ 

	$(i)$ Assume that $|b_j| \geq k \geq 1,$ $j=1, 2, \ldots, t.$ Then	
			$$ \dfrac{zr'(z)}{r(z)} =\dfrac{zp'(z)}{p(z)}-\dfrac{zw'(z)}{w(z)}=\left[\sum_{j=1}^t \dfrac{z}{z-b_j}\right] - \dfrac{zw'(z)}{w(z)}.$$
For $z \in T_1$, this relation with the help of Lemma \ref{lem2} gives
		\begin{align}
					\text{Re} \left(\dfrac{zr'(z)}{r(z)}\right) &= \left( \sum_{j=1}^t \text{Re}\left(\dfrac{z}{z-b_j} \right)\right) - \left(\dfrac{n-|B'(z)|}{2}\right). \label{eqstar}
		\end{align}
For $z\in T_1$ with $z\neq b_j (1\leq j\leq t)$, we consider two cases.\\
\textbf{Case 1: \boldmath{$|b_j|=1$}.} Then $k=1$ and
						$\left\vert \dfrac{z}{z-b_j}\right\vert = \left\vert \dfrac{b_j}{z-b_j}\right\vert = \left\vert \dfrac{z}{z-b_j}-1\right\vert.$ \\
By Lemma \ref{lem1.1}, we get that
						$\text{Re} \left(\dfrac{z}{z-b_j}\right) \leq \dfrac{1}{2} = \dfrac{1}{1+1} = \dfrac{1}{1+k}.$\\						
\textbf{Case 2: \boldmath{$|b_j|>1$}.} A bilinear transformation $w_j(z) = \dfrac{z}{z-b_j}$ maps $T_1$ onto a circle\\ 
						$\left \{ w: \left\vert w+\dfrac{1}{|b_j|^2-1} \right\vert = \dfrac{|b_j|}{|b_j|^2-1 } \right \}.$
Then
		$$\text{Re} \left(\dfrac{z}{z-b_j}\right) 	\leq   \left(-\dfrac{1}{|b_j|^2-1}\right) + \dfrac{|b_j|}{|b_j|^2-1} = \dfrac{1}{|b_j|+1}  \leq \dfrac{1}{1+k}.$$
From both cases, $\text{Re} \left(\dfrac{z}{z-b_j}\right) \leq \dfrac{1}{1+k}$ for $|b_j| \geq k \geq 1$, $z\in T_1$ with \mbox{$z\neq b_j$ $(1\leq j \leq t)$.}\\ 
Substituting this relation into \eqref{eqstar}, we get that 
		$$ \text{Re} \left(\dfrac{zr'(z)}{r(z)}\right)  \leq \sum_{j=1}^t  \left(\dfrac{1}{1+k}\right) - \left(\dfrac{n-|B'(z)|}{2}\right) = \dfrac{|B'(z)|}{2} + \dfrac{2t-n(1+k)}{2(1+k)}. $$

	$(ii)$ Assume that $|b_j| \leq k \leq 1,$ $j=1, 2, \ldots, t.$ Then		
					$$\dfrac{zr'(z)}{r(z)} =\dfrac{zp'(z)}{p(z)}-\dfrac{zw'(z)}{w(z)}=\left[\sum_{j=1}^t \dfrac{z}{z-b_j}\right] - \dfrac{zw'(z)}{w(z)}.$$
For $z \in T_1$, this relation with the help of Lemma \ref{lem2} gives
		\begin{align}
				\text{Re} \left(\dfrac{zr'(z)}{r(z)}\right) &= \left( \sum_{j=1}^t \text{Re}\left(\dfrac{z}{z-b_j} \right)\right) - \left(\dfrac{n-|B'(z)|}{2}\right). \label{eqstar2}
		\end{align}
For $z\in T_1$ with $z\neq b_j (1\leq j\leq t)$, we consider two cases.\\
\textbf{Case 1: \boldmath{$|b_j|=1$}.} Then $k=1$ and
				$\left\vert \dfrac{z}{z-b_j}\right\vert = \left\vert \dfrac{b_j}{z-b_j}\right\vert = \left\vert \dfrac{z}{z-b_j}-1\right\vert.$\\
By Remark \ref{rem1.1}, we get that
				$\text{Re} \left(\dfrac{z}{z-b_j}\right) \geq \dfrac{1}{2} = \dfrac{1}{1+1} = \dfrac{1}{1+k}.$\\
\textbf{Case 2: \boldmath{$|b_j|<1$}.} A bilinear transformation $w_j(z) = \dfrac{z}{z-b_j}$ maps $T_1$ onto a circle\\ 
	$\left \{\left\vert w-\left(\dfrac{1}{1-|b_j|^2}\right) \right\vert = \dfrac{|b_j|}{1-|b_j|^2} \right \}.$\\
Then
		$ \text{Re} \left(\dfrac{z}{z-b_j}\right) 		\geq 		\left(\dfrac{1}{1-|b_j|^2}\right) - \dfrac{|b_j|}{1-|b_j|^2} 
																		= 			\dfrac{1}{1+|b_j|}
																		\geq 		\dfrac{1}{1+k}.$\\
From both cases, $\text{Re} \left(\dfrac{z}{z-b_j}\right) \geq \dfrac{1}{1+k}$ for $|b_j| \leq k \leq 1$, $z\in T_1$ with  \mbox{$z\neq b_j$ $(1\leq j \leq t)$.}\\
Substituting this relation into \eqref{eqstar2}, we get that
	$$\text{Re} \left(\dfrac{zr'(z)}{r(z)}\right) 	\geq  	\sum_{j=1}^t  \left(\dfrac{1}{1+k}\right) - \left(\dfrac{n-|B'(z)|}{2}\right)
																	=			\dfrac{|B'(z)|}{2} - \dfrac{n(1+k)-2t}{2(1+k)}.$$
\end{proof}
\vskip0.5cm

\section{Proofs of the main theorems}
\indent In this section, we present the proofs of our main results. \\ \noindent
\emph{Proof of Theorem \ref{thm1}.} 
		Assume that $r \in \mathcal{R}_n$ has no zeros in $|z| < k$, where $k \geq 1$.\\
Let $\displaystyle m=\min_{|z|=k} |r(z)|$ and $t$ be the number of zeros of $r$ with counting multiplicity. \\ 
If $r(z)$ has a zero on $|z|=k$, then $m=0$ and hence for every $\alpha$ with $|\alpha| <1$, we get $r(z)-\alpha m = r(z).$
In case $r(z)$ has no zeros on $|z|=k$, we have for every $\alpha$ with $|\alpha| <1$ that 
			$$|-\alpha m| = |\alpha|\cdot m < |r(z)|\quad\text{for}\; |z|=k.$$
It follows from Rouche's theorem that $R(z) = r(z) -\alpha m $ and $r(z)$ have the same number of zeros in $\{ |z|<k\}.$ 
That is, for every $\alpha$ with $|\alpha|<1$, $R(z)$ has no zeros in $|z| < k$.
We assume that $R(z) \neq 0.$ Lemma \ref{lem3} (i) yields that for $z\in T_1$,
		\begin{equation}
					\text{Re}\;\left(\dfrac{zR'(z)}{R(z)}\right) \leq \dfrac{|B'(z)|}{2} + \dfrac{2t-n(1+k)}{2(1+k)}. 	 \label{heart}
		\end{equation}
Note that $R^*(z) = B(z)\overline{R(1/\overline{z})} = B(z)\overline{R}(1/z).$ Then
		\begin{align*}
				(R^*(z))' 	&=  	\overline{R}(1/z)B'(z)+B(z)\left( \overline{R}(1/z) \right)' \\
								&=	B'(z)\overline{R}(1/z)+B(z)\left( \overline{R}'(1/z) \right)\left( -\dfrac{1}{z^2} \right) \\
								&=	B'(z)\overline{R}(1/z)-\dfrac{B(z)}{z^2}\cdot \overline{R}'(1/z).
		\end{align*} 
Then \quad $z(R^*(z))'=zB'(z)\overline{R}(1/z)-\dfrac{B(z)}{z}\cdot \overline{R}'(1/z).$
Since $z\in T_1$, we have $\overline{z}=\dfrac{1}{z},$\\ \mbox{$|B(z)|=1$,} $\dfrac{zB'(z)}{B(z)} = |B'(z)|$, and so
		\begin{align*}
				|z(R^*(z))'| 		&= \left\vert zB'(z)\overline{R(z)}-B(z)\overline{zR'(z)} \right\vert\\
										&= \left\vert \dfrac{zB'(z)}{B(z)}\cdot\overline{R(z)}-\overline{zR'(z)}\right\vert \\
										&= \left\vert |B'(z)|\overline{R(z)}-\overline{zR'(z)}\right\vert.
		\end{align*}
Since $|B'(z)|$ is real, we get \quad $|z(R^*(z))'| = \left\vert |B'(z)|R(z) - zR'(z)\right\vert.$ \\
Then 
		\begin{align*}
					\left\vert \dfrac{z(R^*(z))'}{R(z)} \right\vert ^2 
								&= 		\left\vert |B'(z)| - \dfrac{zR'(z)}{R(z)} \right\vert ^2\\
								&= 		|B'(z)|^2 -2|B'(z)|\cdot\text{Re} \left(\dfrac{zR'(z)}{R(z)}\right) + \left\vert \dfrac{zR'(z)}{R(z)} \right\vert^2 \\
								&\geq 	|B'(z)|^2 -2|B'(z)|\left[ \dfrac{|B'(z)|}{2} + \dfrac{2t-n(1+k)}{2(1+k)} \right] + \left\vert \dfrac{zR'(z)}{R(z)} \right\vert^2\\
								&= 		\left\vert \dfrac{zR'(z)}{R(z)} \right\vert^2 + \dfrac{n(1+k)-2t}{(1+k)}\cdot|B'(z)|,
		\end{align*}
where the inequality comes from \eqref{heart}.\\
This implies that for $z \in T_1$, 
		\begin{align}
					\left[ |R'(z)|^2 + \dfrac{n(1+k)-2t}{(1+k)}\cdot|R(z)|^2|B'(z)|\right]^{\frac{1}{2}} \leq |(R^*(z))'|, \label{eq1}
		\end{align}
where $R^*(z) = B(z) \overline{R(1/\overline{z})} = r^*(z) - \overline{\alpha} m B(z).$\\
Moreover,  $(R^*(z))' = (r^*(z))' - \overline{\alpha} m B'(z) $ and $R'(z) = (r(z)-\alpha m)' = r'(z) $.\\
Apply these relations into \eqref{eq1}, we obtain that
		\begin{align}
					\left[ |r'(z)|^2 + \dfrac{n(1+k)-2t}{(1+k)}\cdot|r(z)-\alpha m|^2|B'(z)|\right]^{\frac{1}{2}} \leq |(r^*(z))' -  \overline{\alpha} m B'(z)|, \label{eq2}
		\end{align}
for $z \in T_1$ and for $\alpha$ with $|\alpha| <1$.\\
\noindent Choose the argument of $\alpha$ such that
		\begin{align}
				|(r^*(z))' -  \overline{\alpha} m B'(z)| = |(r^*(z))'| - m\left\vert\alpha\right\vert |B'(z)|,  	\label{eq3}
		\end{align}
for $z \in T_1$.\\
Triangle inequality yields that $|r(z)-m\alpha | \geq \left\vert |r(z)| - m |\alpha|\right\vert. $
Note that $||r(z)|-m|\alpha| |^2 = (|r(z)|-m|\alpha|)^2$ which implies that
		\begin{align}
					|r(z)-m\alpha |^2 \geq  (|r(z)| - m |\alpha|)^2. \label{eq4}
		\end{align}
Substituting relations \eqref{eq3} and \eqref{eq4} into \eqref{eq2}, we obtain that
			$$\left[ |r'(z)|^2 + \dfrac{n(1+k)-2t}{(1+k)}\cdot(|r(z)|-m|\alpha|)^2|B'(z)|\right]^{\frac{1}{2}} \leq |(r^*(z))'| - m\left\vert\alpha\right\vert |B'(z)|.$$
Letting $|\alpha| \rightarrow 1$, we get
			$$\left[ |r'(z)|^2 + \dfrac{n(1+k)-2t}{(1+k)}\cdot(|r(z)|-m)^2|B'(z)|\right]^{\frac{1}{2}} \leq |(r^*(z))'| - m|B'(z)|.$$
Lemma \ref{lem1} implies that
		$$\left[ |r'(z)|^2 + \dfrac{n(1+k)-2t}{(1+k)}\cdot(|r(z)|-m)^2|B'(z)|\right]^{\frac{1}{2}} \leq |B'(z)|\cdot||r|| - |r'(z)| - m|B'(z)|.$$
Equivalently,
		$$ |r'(z)|^2 + \dfrac{n(1+k)-2t}{(1+k)}\cdot(|r(z)|-m)^2|B'(z)| \leq \bigg[ (||r||-m)|B'(z)| - |r'(z)| \bigg]^2.$$
Hence,
		\begin{align*}
				|r'(z)|^2 + \dfrac{n(1+k)-2t}{(1+k)}	&			\cdot(|r(z)|-m)^2|B'(z)| \\
																		&\leq 	(||r||-m)^2|B'(z)|^2 -2(||r||-m)|B'(z)||r'(z)|+|r'(z)|^2.
		\end{align*}
Then
		\begin{align*}
					2(||r||-m)|r'(z)| \leq (||r||-m)^2|B'(z)| - \dfrac{n(1+k)-2t}{(1+k)}(|r(z)|-m)^2.
		\end{align*}
Thus,
		\begin{align*}
				|r'(z)| 		&\leq \dfrac{(||r||-m)^2|B'(z)|}{2(||r||-m)} - \dfrac{(n(1+k)-2t)(|r(z)|-m)^2}{2(1+k)(||r||-m)}\\
								&= \dfrac{1}{2}\left[|B'(z)| - \dfrac{(n(1+k)-2t)(|r(z)|-m)^2}{(1+k)(||r||-m)^2}\right](||r||-m),
		\end{align*}
where $t$ is the number of zeros of $r$ with counting multiplicity and $\displaystyle m=\min_{|z|=k} |r(z)|.$\\
This proves inequality for $R(z) \neq 0.$ In case $R(z)=0$, we obtain that $r'(z) =0.$ This implies that the above inequality is trivially  true.\\ \indent
Therefore, inequality holds for all $z \in T_1.$

To show that equality \eqref{eqthm7} holds for $r(z) = \dfrac{(z+k)^t}{(z-a)^n}$ and \mbox{$B(z) = \left(\dfrac{1-az}{z-a}\right)^n,$} where \mbox{$a >1,$} $k\geq 1$ at $z=1$, we observe that
		$$||r||=\sup_{z\in T_1} |r(z)| = \dfrac{(1+k)^t}{(1-a)^n} = |r(1)|, \;m = \min_{|z|=k} |r(z)| = 0, \;\textrm{and} \;\;|B'(1)| = \dfrac{n(a+1)}{a-1}.$$
Since		$r'(z) 	= 		\left[\dfrac{t}{z+k} + \dfrac{n}{a-z}\right] \left[\dfrac{(z+k)^t}{(z-a)^n}\right], $ we obtain that
			$$|r'(1)| 	= 		\left[\dfrac{t}{1+k} + \dfrac{n}{a-1}\right] \left[\dfrac{(1+k)^t}{(1-a)^n}\right] = \left[\dfrac{t}{1+k} + \dfrac{n}{a-1}\right]||r||.$$
The right side of the relation \eqref{eqthm7} is
		\begin{align*}
			\dfrac{1}{2}\left[|B'(1)| - \dfrac{(n(1+k)-2t)(|r(1)|-m)^2}{(1+k)(||r||-m)^2}\right](||r||-m) 	
						&= 		\dfrac{1}{2}\left[\dfrac{n(a+1)}{a-1} - n +\dfrac{2t}{1+k}\right]||r||\\
						&= 		\dfrac{1}{2}\left[\dfrac{2n}{a-1} +\dfrac{2t}{1+k}\right]||r||\\
						&= 		|r'(1)|.
		\end{align*}
This proves Theorem \ref{thm1} completely. \\
\begin{remark} We show that our upper bound in Theorem \ref{thm1} improves an upper bound in Theorem \ref{thm-azizshah1} as follows.\\
Since $t \leq n$ and $k\geq 1$, we get that
			\begin{align*}
				\dfrac{(n(1+k)-2t)(|r(z)|-m)^2}{(1+k)(||r||-m)^2} &\geq \dfrac{(n(1+k)-2n)(|r(z)|-m)^2}{(1+k)(||r||-m)^2}\\
											&\geq \dfrac{(n(2)-2n)(|r(z)|-m)^2}{(1+k)(||r||-m)^2}\\
											&=0.
			\end{align*}
							Hence, $|B'(z)|- \dfrac{(n(1+k)-2t)(|r(z)|-m)^2}{(1+k)(||r||-m)^2}\leq |B'(z)|.$ \\
							In particular, if $t=n$ and $k=1$, then $|B'(z)|- \dfrac{(n(1+k)-2t)(|r(z)|-m)^2}{(1+k)(||r||-m)^2}= |B'(z)|.$\\
							Therefore, our upper bound in Theorem \ref{thm1} is better than an upper bound in Theorem \ref{thm-azizshah1}.
\end{remark}
\indent

Next, we give the proof of the second main result.\\

\emph{Proof of Theorem \ref{thm2}.} 
Assume that $r \in \mathcal{R}_n$ has no zeros in $|z| > k$, where $k \leq 1$.\\
Let $\displaystyle m=\min_{|z|=k} |r(z)|$ and $t$ be the number of zeros of $r$ with counting multiplicity. \\
Then $m \leq |r(z)|$ for $z \in T_k$.\\
If $r(z)$ has a zero on $|z|=k$, then $m=0$ and hence for every $\alpha$ with $|\alpha| <1$, we get $r(z)+\alpha m = r(z)$. 
In case $r(z)$ has no zeros on $|z|=k$, we have for every $\alpha$ with $|\alpha| <1$ that  $|\alpha m| < |r(z)|\quad\text{for}\; |z|=k.$\\
It follows from Rouche's theorem that $R(z) = r(z) +\alpha m $ and $r(z)$ have the same number of zeros in $\{ |z|<k\}.$ 
That is, for every $\alpha$ with $|\alpha|<1$, $R(z)$ has no zeros in $|z| > k$. 
We assume that $R(z) \neq 0.$  Lemma \ref{lem3} (ii) implies that for $z\in T_1$,
		$$\text{Re}\;\left(\dfrac{zR'(z)}{R(z)}\right) \geq \dfrac{|B'(z)|}{2} - \dfrac{n(1+k)-2t}{2(1+k)},$$
where $R(z) \neq 0$.\\
Then
		$$\left\vert\dfrac{R'(z)}{R(z)}\right\vert=\left\vert\dfrac{zR'(z)}{R(z)}\right\vert \geq \text{Re}\;\left(\dfrac{zR'(z)}{R(z)}\right) \geq \dfrac{|B'(z)|}{2} - \dfrac{n(1+k)-2t}{2(1+k)}.$$
This implies that 
		$$|R'(z)|\geq \left[\dfrac{|B'(z)|}{2} - \dfrac{n(1+k)-2t}{2(1+k)}\right]\cdot|R(z)|,    \qquad\qquad\text{for}\;z\in T_1.$$
Since $|R'(z)| = |r'(z)|$, we obtain that
		$$|r'(z)| \geq \dfrac{1}{2}\left[ |B'(z)| - \dfrac{n(1+k)-2t}{(1+k)}\right]\cdot|r(z)+\alpha m|,   \qquad\text{for}\;z\in T_1.$$
Note that this inequality is trivially true for $R(z)=0.$ Therefore, this inequality holds for all $z \in T_1.$\\
Choosing the argument of $\alpha$ suitably in the right side of the above inequality and noting that the left side is independent of $\alpha$, we get that
		$$|r'(z)| \geq \dfrac{1}{2}\left[ |B'(z)| - \dfrac{n(1+k)-2t}{(1+k)}\right]\cdot(|r(z)|+|\alpha| m),   \qquad\text{for}\;z\in T_1.$$
Letting $|\alpha| \rightarrow 1$, we get  for $z\in T_1$ that
\begin{align*}
|r'(z)| &\geq \dfrac{1}{2}\left[ |B'(z)| - \dfrac{n(1+k)-2t}{(1+k)}\right]\cdot(|r(z)|+m)\\
&=\dfrac{1}{2}\left[ |B'(z)| + \dfrac{2t-n(1+k)}{(1+k)}\right]\cdot(|r(z)|+m),
\end{align*}
where $t$ is the number of zeros of $r$ with counting multiplicity and $\displaystyle m=\min_{|z|=k} |r(z)|$.\\ \indent
As well as the proof of Theorem \ref{thm1}, we can show that equality \eqref{eqthm8} holds for $r(z) = \dfrac{(z+k)^t}{(z-a)^n}$ and \mbox{$B(z) = \left(\dfrac{1-az}{z-a}\right)^n,$} where \mbox{$a >1,$} $k\leq 1$ at $z=1$. 

\section{Conclusion}
This paper investigates the estimate of the modulus of the derivative of $r(z)$ on the unit circle. We establish an upper bound when all zeros of $r(z)$ lie in $|z|\geq k\geq 1$ and a lower bound when all zeros of $r(z)$ lie in $|z|\leq  k \leq 1.$ In particular, if $r(z)$ has exactly $n$ zeros and $k=1$, our main theorems generalize results by  A. Aziz and W. M. Shah \cite{AzizShah} and B. A. Zarger \cite{AzizZar}. Furthermore, if $r(z)$ has a zero on $T_k$, the second main result generalizes a result by A. Aziz and W. M. Shah \cite{AzizShah2}.\\


\textit{Acknowledgements.}
	This work has received a scholarship under the Research Fund for Supporting Lecturer to Admit High Potential Student to Study and Research on His Expert Program Year 2018 from the Graduate School, Khon Kaen University, Thailand (Grant no. 612JT217).



\end{document}